\documentclass{amsart}
\usepackage{setspace}
\usepackage{a4}
\usepackage{amsthm}
\usepackage{latexsym}
\usepackage{amsfonts}
\usepackage{graphicx}
\usepackage{textcomp}
\usepackage{cite}
\usepackage{enumerate}
\usepackage{amssymb}
\usepackage{hyperref}
\usepackage{amsmath}
\usepackage{tikz}
\usepackage[mathscr]{euscript}
\usepackage{mathtools}
\newtheorem{theorem}{Theorem}[section]

\newtheorem{corollary}[theorem] {Corollary}

\newtheorem{problem}[theorem]{Problem}

\title{This is the title}
\usepackage{fancyhdr}

\begin{document}
\hrule\hrule\hrule\hrule\hrule
\vspace{0.3cm}	
\begin{center}
{\bf\large{{Non-Archimedean Cauchy-Schwarz Angle-Length and Chebyshev Arithmetic Mean Inequalities}}}\\
\vspace{0.3cm}
\hrule\hrule\hrule\hrule\hrule
\vspace{0.3cm}
\textbf{K. Mahesh Krishna}\\
School of Mathematics and Natural Sciences\\
Chanakya University Global Campus\\
NH-648, Haraluru Village\\
Devanahalli Taluk, 	Bengaluru  North District\\
Karnataka State, 562 110, India\\
Email: kmaheshak@gmail.com\\

Date: \today
\end{center}

\hrule\hrule
\vspace{0.5cm}
%--------------------------------------
\textbf{Abstract}: Let $\mathbb{K}$ be a non-Archimedean valued field. Let $n \in \mathbb{N}$. For every  $(a_j)_{j=1}^n, (b_j)_{j=1}^n \in \mathbb{K}^n$, we show that 
\begin{align*}
		\left|\sum_{j=1}^{n}a_jb_j\right|^2\leq \max\left\{\left|\sum_{j=1}^n a_j^2\right|\left|\sum_{k=1}^nb^2_k\right|, \max_{1\leq j<k \leq n}|a_jb_k-a_kb_j|^2\right\},
\end{align*}
\begin{align*}
	\left|\sum_{j=1}^n a_j^2\right|\left|\sum_{k=1}^nb^2_k\right|\leq \max\left\{\left|\sum_{j=1}^n a_jb_j\right|^2, \max_{1\leq j<k \leq n}|a_jb_k-a_kb_j|^2\right\},
\end{align*}	
\begin{align*}
	\left|AM(a_j)_{j=1}^n\right|\left|AM(b_j)_{j=1}^n\right|\leq \max\left\{	\left|AM(a_jb_j)_{j=1}^n\right|, \frac{1}{|n|^2}\max_{1\leq j < k \leq n}|a_j-a_k||b_j-b_k|\right\},
\end{align*}
	\begin{align*}
	\left|AM(a_jb_j)_{j=1}^n\right|\leq \max\left\{	\left|AM(a_j)_{j=1}^n\right|\left|AM(b_j)_{j=1}^n\right|, \frac{1}{|n|^2}\max_{1\leq j < k \leq n}|a_j-a_k||b_j-b_k|\right\}, 
\end{align*}
where $AM$ denotes the arithmetic mean. First and second are non-Archimedean versions of Cauchy-Schwarz angle-length and third and fourth are Chebyshev arithmetic mean inequalities. Unlike in the Archimedean case, no conditions are required to derive non-Archimedean Chebyshev arithmetic mean inequalities.

\textbf{Keywords}:  Cauchy-Schwarz inequality, Arithmetic mean, Chebyshev inequality, Non-Archimedean field.

\textbf{Mathematics Subject Classification (2020)}:  26D15, 12J25.

\hrule

%\tableofcontents
\hrule
\section{Introduction}
Universally known Cauchy-Schwarz angle-length inequality, derived in 1821,  is the following. 
\begin{theorem}\cite{STEELE, MITRINOVIC, MITRINOVICPECARICFINK, HARDYLITTLEWOODPOLYA, VENKATACHALA, ENGEL, BECKENBACHBELLMAN}
	(\textbf{Cauchy-Schwarz Angle-Length Inequality}) \label{CSI}
Let $n \in \mathbb{N}$. Then 
\begin{align*}
	\left(\sum_{j=1}^{n}a_jb_j\right)^2\leq \left(\sum_{j=1}^{n}a_j^2\right)\left(\sum_{j=1}^{n}b_j^2\right), \quad \forall (a_j)_{j=1}^n, (b_j)_{j=1}^n \in \mathbb{R}^n.
\end{align*}
\end{theorem}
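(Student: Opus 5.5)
The plan is to prove Theorem \ref{CSI} through Lagrange's identity. This route gives the inequality and, at the same stage, identifies the quantity $\sum_{j<k}(a_jb_k-a_kb_j)^2$. That same quantity appears in the non-Archimedean statements of the abstract, so the real case is worth setting up this way.

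The first step is to verify, for arbitrary $(a_j)_{j=1}^n,(b_j)_{j=1}^n\in\mathbb{R}^n$, the polynomial identity
\begin{align*}
\left(\sum_{j=1}^{n}a_j^2\right)\left(\sum_{k=1}^{n}b_k^2\right)-\left(\sum_{j=1}^{n}a_jb_j\right)^2=\sum_{1\leq j<k\leq n}(a_jb_k-a_kb_j)^2 .
\end{align*}
To check it, expand the left side as the double sum $\sum_{j,k}a_j^2b_k^2-\sum_{j,k}a_jb_ja_kb_k$. The diagonal terms $j=k$ cancel. Pair each $(j,k)$ with $(k,j)$ for $j<k$: the contribution of the pair is $a_j^2b_k^2+a_k^2b_j^2-2a_jb_ja_kb_k$, which is $(a_jb_k-a_kb_j)^2$. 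This identity holds over any commutative ring, a fact I would record for later use in the non-Archimedean setting.

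The second step is the conclusion. Over $\mathbb{R}$ every square is nonnegative, so the right side of the identity is $\geq 0$, and the inequality follows immediately.

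As an alternative I would keep in reserve, consider the quadratic $t\mapsto\sum_j(a_jt-b_j)^2\geq0$. If $\sum_j a_j^2>0$, this quadratic has nonpositive discriminant, which is exactly the inequality. If $\sum_j a_j^2=0$, then all $a_j=0$ and both sides vanish.

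There is no real obstacle here. The only step needing care is the bookkeeping in the double-sum expansion: each off-diagonal pair must be counted once, and the cross term must carry the factor $2$.
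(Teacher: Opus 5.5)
Your proof is correct and follows the same route as the paper: the paper cites Theorem \ref{CSI} without proof, but its proof of Theorem \ref{NACS} explicitly follows Cauchy's Lagrange-identity argument, which is your identity followed by nonnegativity of squares over $\mathbb{R}$. Your pairing check of the identity is also sound, and since it avoids the factor $\frac{1}{2}$ in the paper's intermediate expression, it holds over any commutative ring, as you note.
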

\begin{theorem}\cite{STEELE, MITRINOVIC, MITRINOVICPECARICFINK, HARDYLITTLEWOODPOLYA, VENKATACHALA, ENGEL, BECKENBACHBELLMAN} (\textbf{Cauchy-Schwarz Angle-Length Inequality}) Let $\{a_j\}_{j=1}^\infty, \{b_j\}_{j=1}^\infty \subseteq \mathbb{R}$ be such that 
		\begin{align*}
		\sum_{j=1}^{\infty}a^2_j\in \mathbb{R} \quad \text{and} 	\quad \sum_{j=1}^{\infty}b^2_j\in \mathbb{R}.
	\end{align*}
Then 
\begin{align*}
	\left(\sum_{j=1}^{\infty}a_jb_j\right)^2\leq \left(\sum_{j=1}^{\infty}a_j^2\right)\left(\sum_{j=1}^{\infty}b_j^2\right).
\end{align*}
\end{theorem}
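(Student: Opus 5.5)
The plan is to derive the infinite version from the finite Cauchy--Schwarz angle-length inequality (Theorem \ref{CSI}) by a limiting argument. Throughout I read the hypothesis $\sum_{j=1}^{\infty}a_j^2\in\mathbb{R}$ as convergence of the series of nonnegative terms $a_j^2$, and likewise for $b_j^2$.

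\textbf{Step 1: the series $\sum_j a_jb_j$ converges.} We have $|a_jb_j|\leq \frac{1}{2}(a_j^2+b_j^2)$ by the AM--GM inequality. Since both $\sum a_j^2$ and $\sum b_j^2$ converge, the comparison test shows that $\sum_{j=1}^\infty a_jb_j$ converges absolutely. In particular it converges, so the left side of the statement is a well-defined real number.

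\textbf{Step 2: apply the finite inequality to partial sums.} For each $N\in\mathbb{N}$, Theorem \ref{CSI} applied to $(a_j)_{j=1}^N$ and $(b_j)_{j=1}^N$ gives
\begin{align*}
\left(\sum_{j=1}^{N}a_jb_j\right)^2\leq \left(\sum_{j=1}^{N}a_j^2\right)\left(\sum_{j=1}^{N}b_j^2\right)\leq \left(\sum_{j=1}^{\infty}a_j^2\right)\left(\sum_{j=1}^{\infty}b_j^2\right).
\end{align*}
The second inequality holds because the terms $a_j^2$ and $b_j^2$ are nonnegative, so each partial sum is at most the full sum.

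\textbf{Step 3: pass to the limit.} Let $N\to\infty$. The map $x\mapsto x^2$ is continuous, so the left side converges to $\left(\sum_{j=1}^\infty a_jb_j\right)^2$. The right side does not depend on $N$, so the non-strict inequality is preserved in the limit, which gives the claim.

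The only point needing care is Step 1. The statement implicitly asserts that $\sum a_jb_j$ exists, and the elementary bound $2|a_jb_j|\leq a_j^2+b_j^2$ settles this. Everything else follows directly from the finite case.
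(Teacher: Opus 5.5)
Your proof is correct and complete. The absolute convergence of $\sum_{j} a_jb_j$ follows from $2|a_jb_j|\le a_j^2+b_j^2$; the finite inequality applied to partial sums is bounded by the full sums because the terms are nonnegative; and the non-strict inequality is preserved when $N\to\infty$.

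The paper gives no proof of this classical result and only cites it. Your route (partial sums, then a limit) is the same one the paper uses for its own infinite-sequence non-Archimedean versions, for example the last Chebyshev theorem, where it proves the finite-$n$ bound and lets $n\to\infty$.
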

Let $n \in \mathbb{N}$. Let $(a_j)_{j=1}^n, (b_j)_{j=1}^n \in \mathbb{R}^n$. Recall that the arithmetic means of $(a_j)_{j=1}^n, (b_j)_{j=1}^n$ and $(a_jb_j)_{j=1}^n$ are given by 
\begin{align*}
	AM(a_j)_{j=1}^n\coloneqq \frac{\sum_{j=1}^{n}a_j}{n}, \quad 	AM(a_j)_{j=1}^n\coloneqq \frac{\sum_{j=1}^{n}b_j}{n}, \quad 	AM(a_jb_j)_{j=1}^n\coloneqq \frac{\sum_{j=1}^{n}a_jb_j}{n}. 
\end{align*} 
Chebyshev arithmetic mean inequalities, derived in 1882, gives a relation between $AM(a_j)_{j=1}^n$, $AM(b_j)_{j=1}^n$ and $AM(a_jb_j)_{j=1}^n$ whenever $a_1\leq \cdots \leq a_n$ (resp. $a_1\geq \cdots \geq a_n$) and $ b_1\leq \cdots \leq b_n$ (resp. $ b_1\geq \cdots \geq b_n$).  
\begin{theorem} \cite{MITRINOVICVASIC, FARHADIANPONOMARENKO, VENKATACHALA, ENGEL, HARDYLITTLEWOODPOLYA, BESENYEI}
\label{CI} (\textbf{Chebyshev Arithmetic Mean Inequality})
Let $n \in \mathbb{N}$. Let $(a_j)_{j=1}^n, (b_j)_{j=1}^n \in \mathbb{R}^n$ be such that  
\begin{align*}
	a_1\leq \cdots \leq a_n \quad (\text{resp. }a_1\geq \cdots \geq a_n)
\end{align*} 
and 
\begin{align*}
	b_1\leq \cdots \leq b_n\quad (\text{resp. }b_1\geq \cdots \geq b_n).
\end{align*} 
Then 
\begin{align*}
\left(\sum_{j=1}^{n}a_j\right)\left(\sum_{j=1}^{n}b_j\right)\leq n\left(\sum_{j=1}^{n}a_jb_j\right).
\end{align*}
In other words, 
\begin{align*}
	\left(AM(a_j)_{j=1}^n\right)\left(AM(b_j)_{j=1}^n\right)\leq AM(a_jb_j)_{j=1}^n.
\end{align*}
\end{theorem}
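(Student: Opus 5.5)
The plan is to derive the inequality from the classical Chebyshev sum identity
\begin{align*}
n\sum_{j=1}^{n}a_jb_j-\left(\sum_{j=1}^{n}a_j\right)\left(\sum_{j=1}^{n}b_j\right)=\sum_{1\leq j<k\leq n}(a_j-a_k)(b_j-b_k),
\end{align*}
which is a purely algebraic statement valid over any commutative ring. To check it, expand the right side. Each product $(a_j-a_k)(b_j-b_k)$ equals $a_jb_j+a_kb_k-a_jb_k-a_kb_j$.

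Summing over the pairs $j<k$, each diagonal term $a_jb_j$ appears exactly $n-1$ times. Every off-diagonal product $a_jb_k$ with $j\neq k$ appears exactly once with a minus sign. The right side is therefore $(n-1)\sum_j a_jb_j-\sum_{j\neq k}a_jb_k$.

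For the left side, note that $\left(\sum_j a_j\right)\left(\sum_k b_k\right)=\sum_j a_jb_j+\sum_{j\neq k}a_jb_k$. Substituting this into the left side gives the same expression $(n-1)\sum_j a_jb_j-\sum_{j\neq k}a_jb_k$, so the identity holds.

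Now we use the sign hypothesis. In the case $a_1\leq\cdots\leq a_n$ and $b_1\leq\cdots\leq b_n$, for every $j<k$ both factors $a_j-a_k$ and $b_j-b_k$ are $\leq 0$, so their product is $\geq 0$. In the decreasing case both factors are $\geq 0$, and the product is again $\geq 0$. Hence the right side of the identity is a sum of nonnegative reals, and
\begin{align*}
\left(\sum_{j=1}^{n}a_j\right)\left(\sum_{j=1}^{n}b_j\right)\leq n\sum_{j=1}^{n}a_jb_j.
\end{align*}
Dividing by $n^2>0$ gives the arithmetic mean form $AM(a_j)_{j=1}^n\cdot AM(b_j)_{j=1}^n\leq AM(a_jb_j)_{j=1}^n$.

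The only step needing care is the bookkeeping in the identity, namely confirming the count $n-1$ for the diagonal terms and the multiplicity one for the cross terms. Everything else is immediate. It is worth recording the identity separately because it makes no use of order. This suggests that in the non-Archimedean setting of the abstract we can apply the ultrametric inequality to its two sides, which would give the third and fourth inequalities without any monotonicity assumption.
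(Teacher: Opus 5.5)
Your proof is correct and is essentially the paper's own argument: the paper cites this classical theorem without proof, but its proof of the non-Archimedean Chebyshev inequality uses Korkin's identity $n\sum_j a_jb_j-\left(\sum_j a_j\right)\left(\sum_k b_k\right)=\sum_{j<k}(a_j-a_k)(b_j-b_k)$, and your sign observation then finishes the real case. The paper obtains this identity by symmetrizing the full double sum and dividing by $2$, whereas your direct pair count needs no division and holds over any commutative ring, which is slightly cleaner for the non-Archimedean use you anticipate (e.g.\ in characteristic $2$).
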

Theorem \ref{CI} has following extension. 
\begin{theorem} \cite{MITRINOVICVASIC, FARHADIANPONOMARENKO, VENKATACHALA, ENGEL, HARDYLITTLEWOODPOLYA, BESENYEI, WRIGHT}
	\label{CI2} (\textbf{Chebyshev Arithmetic Mean Inequality})
	Let $n \in \mathbb{N}$. Let $ r_1, \dots, r_n\in [0, \infty)$ be such that $\sum_{j=1}^{n}r_j=1$.
	Let $(a_j)_{j=1}^n, (b_j)_{j=1}^n \in \mathbb{R}^n$ be such that  
	\begin{align*}
		a_1\leq \cdots \leq a_n \quad (\text{resp. }a_1\geq \cdots \geq a_n)
	\end{align*} 
	and 
	\begin{align*}
		b_1\leq \cdots \leq b_n\quad (\text{resp. }b_1\geq \cdots \geq b_n).
	\end{align*} 
	Then 
	\begin{align*}
		\left(\sum_{j=1}^{n}r_ja_j\right)\left(\sum_{k=1}^{n}r_kb_k\right)\leq \sum_{j=1}^{n}r_ja_jb_j.
	\end{align*}
\end{theorem}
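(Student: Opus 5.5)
The statement to be proved is Theorem \ref{CI2}, the weighted Chebyshev inequality over $\mathbb{R}$. The plan is to reduce it to a single sum of products that are all nonnegative, via a Lagrange-type identity. The case where both sequences are decreasing follows from the increasing case by replacing $(a_j)$ and $(b_j)$ with $(-a_j)$ and $(-b_j)$. Both sides of the inequality are invariant under this sign change, since each is bilinear in $(a,b)$. So we assume $a_1\leq \cdots \leq a_n$ and $b_1\leq \cdots \leq b_n$.

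The key step is the identity
\begin{align*}
\sum_{j=1}^{n}r_ja_jb_j-\left(\sum_{j=1}^{n}r_ja_j\right)\left(\sum_{k=1}^{n}r_kb_k\right)=\frac{1}{2}\sum_{j=1}^{n}\sum_{k=1}^{n}r_jr_k(a_j-a_k)(b_j-b_k).
\end{align*}
To verify it, expand the right-hand side into four double sums. The two terms $\sum_{j,k}r_jr_ka_jb_j$ and $\sum_{j,k}r_jr_ka_kb_k$ each equal $\sum_j r_ja_jb_j$, because $\sum_k r_k=1$. The two cross terms $-\sum_{j,k}r_jr_ka_jb_k$ and $-\sum_{j,k}r_jr_ka_kb_j$ each equal $-\left(\sum_j r_ja_j\right)\left(\sum_k r_kb_k\right)$. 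Halving the total gives exactly the left-hand side. This is the only place where the normalization $\sum_j r_j=1$ is used.

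It remains to check the sign of each summand. Since both sequences are increasing, for any pair $j,k$ the differences $a_j-a_k$ and $b_j-b_k$ have the same sign, or one of them is zero. Indeed, if $j\geq k$ both differences are $\geq 0$, and if $j<k$ both are $\leq 0$. Hence $(a_j-a_k)(b_j-b_k)\geq 0$. Together with $r_jr_k\geq 0$, every summand on the right is nonnegative, and the inequality follows.

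Theorem \ref{CI} is the special case $r_j=1/n$. There is no real obstacle here: the only point needing care is the bookkeeping in the expansion, where the weight normalization must be applied to the correct index in each term. This symmetric double-sum identity is also the natural template for the non-Archimedean Chebyshev inequalities in the abstract. There the ultrametric inequality replaces the sign argument, which explains why no monotonicity hypothesis is needed in that setting.
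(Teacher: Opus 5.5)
Your proof is correct and is the classical Korkin argument: the weighted symmetric identity, with $(a_j-a_k)(b_j-b_k)\geq 0$ for similarly ordered sequences and $r_jr_k\geq 0$. The paper only cites this theorem, but its proof of Theorem \ref{NACAMI} rests on the same identity, with the ultrametric inequality in place of your sign argument, exactly as you anticipated.
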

\begin{corollary} \cite{MITRINOVICVASIC, FARHADIANPONOMARENKO, VENKATACHALA, ENGEL, HARDYLITTLEWOODPOLYA, BESENYEI}
	 (\textbf{Chebyshev Arithmetic Mean Inequality})
	Let $n \in \mathbb{N}$. Let $ r_1, \dots, r_n\in [0, \infty)$. 
	Let $(a_j)_{j=1}^n, (b_j)_{j=1}^n \in \mathbb{R}^n$ be such that  
	\begin{align*}
		a_1\leq \cdots \leq a_n \quad (\text{resp. }a_1\geq \cdots \geq a_n)
	\end{align*} 
	and 
	\begin{align*}
		b_1\leq \cdots \leq b_n\quad (\text{resp. }b_1\geq \cdots \geq b_n).
	\end{align*} 
	Then 
	\begin{align*}
		\left(\sum_{j=1}^{n}r_ja_j\right)\left(\sum_{k=1}^{n}r_kb_k\right)\leq \left(\sum_{k=1}^{n}r_k\right)\left(\sum_{j=1}^{n}r_ja_jb_j\right).
	\end{align*}
\end{corollary}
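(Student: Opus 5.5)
The plan is to deduce this Corollary from Theorem \ref{CI2} by normalizing the weights.

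First dispose of the degenerate case. If $\sum_{k=1}^{n}r_k=0$, then every $r_k=0$, because all weights are nonnegative. Both sides are then $0$ and the inequality holds with equality.

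Otherwise set $R\coloneqq\sum_{k=1}^{n}r_k>0$ and define $s_j\coloneqq r_j/R$. Then $s_1,\dots,s_n\in[0,\infty)$ and $\sum_{j=1}^n s_j=1$, which is exactly the hypothesis on the weights in Theorem \ref{CI2}. The monotonicity hypotheses on $(a_j)_{j=1}^n$ and $(b_j)_{j=1}^n$ are unchanged, in either the increasing or the decreasing version. Theorem \ref{CI2} therefore gives
\begin{align*}
\left(\sum_{j=1}^{n}s_ja_j\right)\left(\sum_{k=1}^{n}s_kb_k\right)\leq \sum_{j=1}^{n}s_ja_jb_j.
\end{align*}

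Substituting $s_j=r_j/R$, the left side equals $R^{-2}\left(\sum_j r_ja_j\right)\left(\sum_k r_kb_k\right)$ and the right side equals $R^{-1}\sum_j r_ja_jb_j$. Multiplying through by $R^2>0$ preserves the inequality and yields the stated bound with the factor $\sum_k r_k$ on the right. The only step needing care is the zero-weight case, since the normalization divides by $R$; it is handled separately above.
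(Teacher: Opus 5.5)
Your proposal is correct and matches the intended derivation: the paper gives no written proof, only citations, and presents this result as a corollary of Theorem \ref{CI2}; normalizing by $R=\sum_k r_k$ and then multiplying by $R^2$ is exactly that deduction. You also correctly handle the one detail needed to make the normalization legitimate, the case $R=0$, where nonnegativity forces every $r_k=0$.
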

Theorem \ref{CI2} easily extends to infinite sequences. 
\begin{theorem}\cite{MITRINOVICVASIC, FARHADIANPONOMARENKO, VENKATACHALA, ENGEL, HARDYLITTLEWOODPOLYA, BESENYEI}
 (\textbf{Chebyshev Arithmetic Mean Inequality})
 Let $ \{r_j\}_{j=1}^\infty\subseteq [0, \infty)$ be such that $\sum_{j=1}^{\infty}r_j=1$. Let $\{a_j\}_{j=1}^\infty, \{b_j\}_{j=1}^\infty \subseteq \mathbb{R}$ be such that  
\begin{align*}
	a_1\leq a_2 \leq  \cdots  \quad (\text{resp. }a_1\geq a_2 \geq \cdots )
\end{align*} 
and 
\begin{align*}
	b_1\leq b_2 \leq \cdots \quad (\text{resp. }b_1\geq b_2 \geq \cdots ).
\end{align*} 
Assume that 
\begin{align*}
	\sum_{j=1}^{\infty}a_j\in \mathbb{R} \quad \text{and} 	\quad \sum_{j=1}^{\infty}b_j\in \mathbb{R}.
\end{align*}
Then 
\begin{align*}
	\left(\sum_{j=1}^{\infty}r_ja_j\right)\left(\sum_{k=1}^{\infty}r_kb_k\right)\leq \sum_{j=1}^{\infty}r_ja_jb_j.
\end{align*}	
\end{theorem}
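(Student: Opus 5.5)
The statement to prove is the infinite weighted Chebyshev inequality: $\{r_j\}\subseteq[0,\infty)$ with $\sum r_j=1$, and $\{a_j\},\{b_j\}$ similarly monotone. My plan is to reduce it to Theorem \ref{CI2} by truncating and passing to the limit. First I need convergence of the three series $\sum r_ja_j$, $\sum r_kb_k$ and $\sum r_ja_jb_j$.

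\textbf{Convergence.} The hypotheses $\sum a_j\in\mathbb{R}$ and $\sum b_j\in\mathbb{R}$ force $a_j\to 0$ and $b_j\to 0$. Being monotone, both sequences are therefore bounded, say $|a_j|,|b_j|\le M$. Then $\sum r_j|a_j|\le M$ and $\sum r_j|b_j|\le M$. Also $\sum r_j|a_jb_j|\le M^2\sum r_j=M^2$. So all three series converge absolutely. This uses only boundedness, which monotonicity plus convergence to a limit supplies.

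\textbf{Truncation.} For each $N$ set $R_N=\sum_{j=1}^N r_j$. If $R_N=0$ there is nothing to do at that $N$. Otherwise the weights $r_j/R_N$, for $1\le j\le N$, are nonnegative and sum to $1$. The finite sequences $a_1,\dots,a_N$ and $b_1,\dots,b_N$ keep the same monotonicity. Theorem \ref{CI2} then gives
\begin{align*}
\left(\sum_{j=1}^{N}r_ja_j\right)\left(\sum_{k=1}^{N}r_kb_k\right)\leq R_N\sum_{j=1}^{N}r_ja_jb_j .
\end{align*}
Equivalently, one can cite the preceding corollary directly, since it allows unnormalized weights.

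\textbf{Limit.} Let $N\to\infty$. The left side converges to the product of the two infinite sums. Since $R_N\to 1$ and $\sum_{j\le N} r_ja_jb_j$ converges, the right side converges to $\sum r_ja_jb_j$. Non-strict inequalities survive limits, which gives the claim.

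The only real obstacle is the convergence step, which is handled by the boundedness observation above. Even without the hypotheses on $\sum a_j$ and $\sum b_j$, bounded monotone sequences would suffice.
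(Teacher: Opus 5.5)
Your proof is correct. Since $a_j,b_j\to 0$, both sequences are bounded, so all three series converge absolutely; applying Theorem~\ref{CI2} to the normalized weights $r_j/R_N$ and letting $N\to\infty$ is exactly the truncation-and-limit extension the paper has in mind when it says Theorem~\ref{CI2} ``easily extends to infinite sequences'' (the paper gives no separate proof, but uses the same scheme for its non-Archimedean infinite version).
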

\begin{corollary}
	\cite{MITRINOVICVASIC, FARHADIANPONOMARENKO, VENKATACHALA, ENGEL, HARDYLITTLEWOODPOLYA, BESENYEI}
	(\textbf{Chebyshev Arithmetic Mean Inequality})
	Let $ \{r_j\}_{j=1}^\infty\subseteq [0, \infty)$ be such that $\sum_{j=1}^{\infty}r_j\in \mathbb{R}$. Let $\{a_j\}_{j=1}^\infty, \{b_j\}_{j=1}^\infty \subseteq \mathbb{R}$ be such that  
	\begin{align*}
		a_1\leq a_2 \leq  \cdots  \quad (\text{resp. }a_1\geq a_2 \geq \cdots )
	\end{align*} 
	and 
	\begin{align*}
		b_1\leq b_2 \leq \cdots \quad (\text{resp. }b_1\geq b_2 \geq \cdots ).
	\end{align*} 
	Assume that 
	\begin{align*}
		\sum_{j=1}^{\infty}a_j\in \mathbb{R} \quad \text{and} 	\quad \sum_{j=1}^{\infty}b_j\in \mathbb{R}.
	\end{align*}
	Then 
	\begin{align*}
		\left(\sum_{j=1}^{\infty}r_ja_j\right)\left(\sum_{k=1}^{\infty}r_kb_k\right)\leq\left(\sum_{k=1}^{\infty}r_k\right) \left(\sum_{j=1}^{\infty}r_ja_jb_j\right).
	\end{align*}	
\end{corollary}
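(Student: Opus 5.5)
The plan is to reduce this corollary to the preceding theorem (the infinite-sequence version of Theorem \ref{CI2} with weights summing to $1$) by normalizing the weights. Put $R\coloneqq\sum_{k=1}^{\infty}r_k\in[0,\infty)$, which is finite by hypothesis.

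First I will check that every series in the statement converges, so that the manipulations are legitimate. Since $(a_j)$ is monotone and $\sum_j a_j$ converges, we get $a_j\to 0$. In particular $(a_j)$ is bounded, say $|a_j|\leq M$, and likewise $|b_j|\leq M'$. Then $\sum_j r_j|a_j|\leq M R$, $\sum_j r_j|b_j|\leq M'R$ and $\sum_j r_j|a_jb_j|\leq MM'R$. Hence $\sum_j r_ja_j$, $\sum_k r_kb_k$ and $\sum_j r_ja_jb_j$ all converge absolutely.

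Next I split into two cases.

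\emph{Case $R=0$.} Since each $r_j\geq 0$, every $r_j$ is $0$. Both sides of the inequality are then $0$, and it holds trivially.

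\emph{Case $R>0$.} Define $s_j\coloneqq r_j/R\in[0,\infty)$, so that $\sum_{j=1}^\infty s_j=1$. The sequences $(a_j)$ and $(b_j)$ keep their common monotonicity and summability, so the preceding theorem applies with weights $(s_j)$ and gives
\begin{align*}
\left(\sum_{j=1}^{\infty}s_ja_j\right)\left(\sum_{k=1}^{\infty}s_kb_k\right)\leq \sum_{j=1}^{\infty}s_ja_jb_j .
\end{align*}
Constants pull out of convergent series. So multiplying both sides by $R^2>0$ preserves the inequality and turns the left side into $\left(\sum_j r_ja_j\right)\left(\sum_k r_kb_k\right)$ and the right side into $R\sum_j r_ja_jb_j$. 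This is exactly the claim.

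There is no real obstacle here. The only points needing care are the degenerate case $R=0$, where division is impossible, and the convergence of the weighted series. The boundedness argument above handles the convergence.
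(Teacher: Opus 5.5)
Your proof is correct. The paper gives no separate proof of this cited corollary, but placing it right after the theorem with weights summing to $1$ points to exactly your rescaling $r_j \mapsto r_j/\sum_k r_k$, which is the same ``without loss of generality $\sum_j r_j=1$'' step the paper uses in its last non-Archimedean theorem. Your treatment of the degenerate case $\sum_k r_k=0$, and of absolute convergence via boundedness of $(a_j)$ and $(b_j)$, supplies details the paper leaves implicit.
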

Following is the Chebyshev arithmetic mean inequality whenever sequences are ordered in different ways. 
\begin{theorem}	\cite{MITRINOVICVASIC, FARHADIANPONOMARENKO, VENKATACHALA, ENGEL, HARDYLITTLEWOODPOLYA, BESENYEI} \label{CR}
	(\textbf{Chebyshev Arithmetic Mean Inequality})
	Let $ \{r_j\}_{j=1}^\infty\subseteq [0, \infty)$ be such that $\sum_{j=1}^{\infty}r_j\in \mathbb{R}$. Let $\{a_j\}_{j=1}^\infty, \{b_j\}_{j=1}^\infty \subseteq \mathbb{R}$ be such that  
	\begin{align*}
		a_1\leq a_2 \leq  \cdots  \quad (\text{resp. }a_1\geq a_2 \geq \cdots )
	\end{align*} 
	and 
	\begin{align*}
		b_1\geq b_2 \geq \cdots \quad (\text{resp. }b_1\leq b_2 \leq \cdots ).
	\end{align*} 
	Assume that 
	\begin{align*}
		\sum_{j=1}^{\infty}a_j\in \mathbb{R} \quad \text{and} 	\quad \sum_{j=1}^{\infty}b_j\in \mathbb{R}.
	\end{align*}
	Then 
	\begin{align*}
		\left(\sum_{j=1}^{\infty}r_ja_j\right)\left(\sum_{k=1}^{\infty}r_kb_k\right)\geq\left(\sum_{k=1}^{\infty}r_k\right) \left(\sum_{j=1}^{\infty}r_ja_jb_j\right).
	\end{align*}	
	
\end{theorem}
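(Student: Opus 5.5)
The statement is the classical Chebyshev inequality in reversed form, with weights $r_j\ge 0$ over infinite sequences. I would reduce it to a single sign argument on a double sum. Assume $a$ is increasing and $b$ is decreasing; the other case follows by swapping the roles of the sequences, or by replacing $(a_j,b_j)$ with $(-a_j,-b_j)$, which leaves the inequality unchanged. The key observation is that for every pair $j,k$,
\begin{align*}
	(a_j-a_k)(b_j-b_k)\leq 0 .
\end{align*}
If $j\le k$, then $a_j\le a_k$ and $b_j\ge b_k$, so the two factors have opposite signs; the case $j>k$ is symmetric.

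Next I would multiply by $r_jr_k\ge 0$ and sum over all $j,k$. Expanding the product gives the identity
\begin{align*}
	\sum_{j=1}^\infty\sum_{k=1}^\infty r_jr_k(a_j-a_k)(b_j-b_k)=2\left(\sum_{k=1}^\infty r_k\right)\left(\sum_{j=1}^\infty r_ja_jb_j\right)-2\left(\sum_{j=1}^\infty r_ja_j\right)\left(\sum_{k=1}^\infty r_kb_k\right).
\end{align*}
The left side is $\le 0$ by the observation above. Dividing by $2$ and rearranging yields exactly the claimed inequality.

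The step I expect to be the main obstacle is justifying this expansion for infinite series, which requires that the four sums $\sum r_j$, $\sum r_ja_j$, $\sum r_jb_j$ and $\sum r_ja_jb_j$ converge absolutely. I would handle this in two steps.

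\begin{enumerate}[(i)]
\item \emph{Boundedness.} A monotone sequence with $\sum a_j$ convergent must have $a_j\to 0$, and hence is bounded. The same holds for $b$.
\item \emph{Absolute convergence.} Since $r_j\ge 0$ and $\sum r_j<\infty$, each of the four series is dominated by $C\sum r_j$ for a constant $C$, so all of them converge absolutely.
\end{enumerate}

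With absolute convergence in hand, the double series also converges absolutely. Fubini/Tonelli for series then lets me split it into the four product terms, which justifies the identity.

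An alternative route avoids double series altogether. I would prove the finite version (the statement with finitely many terms, whose proof is the same pair-sum argument) for the first $N$ terms, and then let $N\to\infty$. Each of the four partial sums converges, so the non-strict inequality passes to the limit.
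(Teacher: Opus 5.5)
Your proposal is correct: the pairwise sign observation, the Korkin double-sum identity, and your justification of convergence (bounded $a_j,b_j$ together with $\sum r_j<\infty$) all hold. The paper only cites this classical result without proof, but your argument is the same Korkin identity the paper uses for its non-Archimedean analogues such as Theorem~\ref{NACAMI}, and your truncate-then-let-$N\to\infty$ alternative matches how the paper handles infinite sequences.
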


It is natural and important to ask what is non-Archimedean version of Theorem \ref{CSI}, Theorem \ref{CI},  Theorem \ref{CI2} and Theorem \ref{CR}? We answer the question.

\section{Non-Archimedean Cauchy-Schwarz Angle-Length and Chebyshev Arithmetic Mean Inequalities}
Let $\mathbb{K}$ be a field. Recall that a map $|\cdot|: \mathbb{K} \to [0, \infty)$ is said to be a non-Archimedean  valuation  if following conditions holds.
\begin{enumerate}[\upshape(i)]
	\item If $\lambda  \in \mathbb{K}$ is such that $|\lambda|=0$, then $\lambda=0$.
	\item $|\lambda \mu|=|\lambda||\mu|$ for all $\lambda, \mu  \in \mathbb{K}$.
	\item (Ultra-triangle inequality) $|\lambda+\mu|\leq \max\{|\lambda|, |\mu|\}$ for all $\lambda, \mu \in \mathbb{K}$.
\end{enumerate}
In this case, $\mathbb{K}$ is called as non-Archimedean valued field \cite{SCHIKHOF}.  We first derive non-Archimedean Cauchy-Schwarz angle-length inequality.
\begin{theorem}\label{NACS} (\textbf{Non-Archimedean Cauchy-Schwarz Angle-Length Inequality}) Let $n \in \mathbb{N}$ and $\mathbb{K}$ be a non-Archimedean valued field. Let $(a_j)_{j=1}^n, (b_j)_{j=1}^n \in \mathbb{K}^n$. Then 
\begin{align*}
	\left|\sum_{j=1}^{n}a_jb_j\right|^2&\leq \max\left\{\left|\sum_{j=1}^n a_j^2\right|\left|\sum_{k=1}^nb^2_k\right|, \max_{1\leq j<k \leq n}|a_jb_k-a_kb_j|^2\right\}\\
	&\leq \left(\max_{1\leq j \leq n}|a_j|\right)^2\left(\max_{1\leq k \leq n}|b_k|\right)^2.
\end{align*}	
\end{theorem}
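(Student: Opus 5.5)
The plan is to use Lagrange's identity, which is a polynomial identity over $\mathbb{Z}$ and so holds in any commutative ring, in particular in $\mathbb{K}$:
\begin{align*}
\left(\sum_{j=1}^n a_j^2\right)\left(\sum_{k=1}^n b_k^2\right)=\left(\sum_{j=1}^n a_jb_j\right)^2+\sum_{1\leq j<k\leq n}(a_jb_k-a_kb_j)^2 .
\end{align*}
To verify it, I would expand the left side as $\sum_{j,k}a_j^2b_k^2$. The diagonal terms $j=k$ give $\sum_j a_j^2b_j^2$. The off-diagonal pairs $j<k$ give $a_j^2b_k^2+a_k^2b_j^2=(a_jb_k-a_kb_j)^2+2a_jb_ja_kb_k$. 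The cross terms $2a_jb_ja_kb_k$ together with the diagonal then reassemble into $\left(\sum_j a_jb_j\right)^2$. No division or ordering is used, so the identity is valid over any field.

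For the first inequality, rearrange the identity as
\begin{align*}
\left(\sum_{j=1}^n a_jb_j\right)^2=\left(\sum_{j=1}^n a_j^2\right)\left(\sum_{k=1}^n b_k^2\right)-\sum_{1\leq j<k\leq n}(a_jb_k-a_kb_j)^2 .
\end{align*}
Then take absolute values. By multiplicativity, $|x^2|=|x|^2$ and $|-y|=|y|$, where the latter follows from $|-1|^2=|1|=1$. Applying the ultra-triangle inequality repeatedly to the finite sum on the right gives
\begin{align*}
\left|\sum_{j=1}^n a_jb_j\right|^2\leq\max\left\{\left|\sum_{j=1}^n a_j^2\right|\left|\sum_{k=1}^n b_k^2\right|,\ \max_{1\leq j<k\leq n}|a_jb_k-a_kb_j|^2\right\}.
\end{align*}
This is exactly the first bound in Theorem \ref{NACS}.

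For the second inequality, set $A=\max_j|a_j|$ and $B=\max_k|b_k|$ and bound each term in the maximum separately:
\begin{itemize}
\item By the ultra-triangle inequality, $\left|\sum_j a_j^2\right|\leq\max_j|a_j|^2=A^2$, and likewise $\left|\sum_k b_k^2\right|\leq B^2$. Hence the product term is at most $A^2B^2$.
\item For each pair $j<k$, $|a_jb_k-a_kb_j|\leq\max\{|a_j||b_k|,|a_k||b_j|\}\leq AB$, so its square is at most $A^2B^2$.
\end{itemize}
Taking the maximum of these bounds gives the final inequality.

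I do not expect a genuine obstacle. The only points needing care are:
\begin{itemize}
\item confirming that Lagrange's identity needs no characteristic assumption (it has integer coefficients and involves no division);
\item checking that $|-1|=1$ holds in any valued field, so the minus signs disappear under the valuation.
\end{itemize}
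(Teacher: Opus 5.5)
Your proof is correct and follows the same route as the paper: rearrange Lagrange's identity and apply the ultra-triangle inequality termwise. You also supply the argument for the second inequality (each term of the maximum is at most $A^2B^2$), which the paper's proof omits. In addition, by writing the identity directly over $j<k$ you avoid the factor $\frac{1}{2}$ that the paper's intermediate step uses, so your argument remains valid in characteristic $2$.
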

\begin{proof}
Our proof is motivated from the arguments for the Archimedean case by Cauchy using Lagrange identity  \cite{STEELE, MALIGRANDA, MITRINOVIC, MITRINOVICPECARICFINK, GIDEANICULESCU, GUASTI}. The Lagrange identity says that 
\begin{align*}
	\sum_{1\leq j<k \leq n}(a_jb_k-a_kb_j)^2&=\frac{1}{2}\sum_{j=1}^{n}\sum_{k=1}^{n}(a_jb_k-a_kb_j)^2\\
	&=\left(\sum_{j=1}^n a_j^2\right)\left(\sum_{k=1}^n b_k^2\right)-\left(\sum_{j=1}^n a_jb_j\right)^2.
\end{align*}
Rearranging, we get 
\begin{align*}
	\left(\sum_{j=1}^n a_jb_j\right)^2&=\left(\sum_{j=1}^n a_j^2\right)\left(\sum_{k=1}^n b_k^2\right)-\sum_{1\leq j<k \leq n}(a_jb_k-a_kb_j)^2.
\end{align*}
By taking non-Archimedean absolute value, we get 
\begin{align*}
	\left|\sum_{j=1}^n a_jb_j\right|^2&=\left|\left(\sum_{j=1}^n a_j^2\right)\left(\sum_{k=1}^n b_k^2\right)-\sum_{1\leq j<k \leq n}(a_jb_k-a_kb_j)^2\right|\\
	&\leq \max\left\{\left|\sum_{j=1}^n a_j^2\right|\left|\sum_{k=1}^nb^2_k\right|, \left|\sum_{1\leq j<k \leq n}(a_jb_k-a_kb_j)^2\right|\right\}\\
	&\leq \max\left\{\left|\sum_{j=1}^n a_j^2\right|\left|\sum_{k=1}^nb^2_k\right|, \max_{1\leq j<k \leq n}|a_jb_k-a_kb_j|^2\right\}.
\end{align*}
\end{proof}
\begin{theorem}
(\textbf{Non-Archimedean Cauchy-Schwarz Angle-Length Inequality}) Let $n \in \mathbb{N}$ and $\mathbb{K}$ be a non-Archimedean valued field. Let $(a_j)_{j=1}^n, (b_j)_{j=1}^n \in \mathbb{K}^n$. Then 
\begin{align*}
	\left|\sum_{j=1}^n a_j^2\right|\left|\sum_{k=1}^nb^2_k\right|\leq \max\left\{\left|\sum_{j=1}^n a_jb_j\right|^2, \max_{1\leq j<k \leq n}|a_jb_k-a_kb_j|^2\right\}.
\end{align*}		
\end{theorem}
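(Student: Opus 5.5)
The plan is to use the Lagrange identity exactly as in the proof of Theorem \ref{NACS}, but solved for the other term. Over any commutative ring, in particular over $\mathbb{K}$, we have
\begin{align*}
	\left(\sum_{j=1}^n a_j^2\right)\left(\sum_{k=1}^n b_k^2\right)=\left(\sum_{j=1}^n a_jb_j\right)^2+\sum_{1\leq j<k \leq n}(a_jb_k-a_kb_j)^2 .
\end{align*}
I will use the form with the sum over $j<k$ rather than the $\frac{1}{2}\sum_{j,k}$ form. The reason is that the $j<k$ version is a polynomial identity with integer coefficients, so it holds in every characteristic, including characteristic $2$. To check it, expand $\left(\sum_j a_j^2\right)\left(\sum_k b_k^2\right)-\left(\sum_j a_jb_j\right)^2$. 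The diagonal terms $a_j^2b_j^2$ cancel. The remaining terms are $\sum_{j<k}\left(a_j^2b_k^2+a_k^2b_j^2-2a_jb_ja_kb_k\right)$, and each summand equals $(a_jb_k-a_kb_j)^2$.

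Next, take the non-Archimedean absolute value of both sides and use multiplicativity on the left:
\begin{align*}
	\left|\sum_{j=1}^n a_j^2\right|\left|\sum_{k=1}^nb^2_k\right|=\left|\left(\sum_{j=1}^n a_jb_j\right)^2+\sum_{1\leq j<k \leq n}(a_jb_k-a_kb_j)^2\right|.
\end{align*}
Applying the ultra-triangle inequality once bounds this by
\begin{align*}
	\max\left\{\left|\sum_{j=1}^n a_jb_j\right|^2, \left|\sum_{1\leq j<k \leq n}(a_jb_k-a_kb_j)^2\right|\right\}.
\end{align*}
Applying it again to the finite sum over pairs $j<k$, together with $|x^2|=|x|^2$, bounds the second entry by $\max_{1\leq j<k\leq n}|a_jb_k-a_kb_j|^2$. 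This gives the claimed inequality.

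For $n=1$ there are no pairs $j<k$. The inner maximum is then read as $0$, and the statement reduces to the identity $|a_1^2||b_1^2|=|a_1b_1|^2$.

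I do not expect a real obstacle. The only point needing care is that the identity must be used in the integer-coefficient form, so that no division by $2$ is needed and the argument is valid in characteristic $2$.
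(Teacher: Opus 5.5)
Your proposal is correct and follows essentially the same route as the paper: rewrite the Lagrange identity as $\left(\sum_{j} a_j^2\right)\left(\sum_{k} b_k^2\right)=\left(\sum_{j} a_jb_j\right)^2+\sum_{j<k}(a_jb_k-a_kb_j)^2$, take absolute values, and apply the ultra-triangle inequality twice. Two details you add are left implicit in the paper: you use the integer-coefficient $j<k$ form so nothing breaks in characteristic $2$, and you read the empty maximum as $0$ when $n=1$.
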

\begin{proof}
	From the computations done in the proof of Theorem \ref{NACS}, we get
	\begin{align*}
		\left(\sum_{j=1}^n a_j^2\right)\left(\sum_{k=1}^n b_k^2\right)=	\left(\sum_{j=1}^n a_jb_j\right)^2+\sum_{1\leq j<k \leq n}(a_jb_k-a_kb_j)^2.
	\end{align*}
	By taking non-Archimedean absolute value, we get 
	\begin{align*}
	\left|\sum_{j=1}^n a_j^2\right|\left|\sum_{k=1}^nb^2_k\right|&=\left|\left(\sum_{j=1}^n a_jb_j\right)^2+\sum_{1\leq j<k \leq n}(a_jb_k-a_kb_j)^2\right|\\
	&\leq \max\left\{\left|\sum_{j=1}^n a_jb_j\right|^2, \left|\sum_{1\leq j<k \leq n}(a_jb_k-a_kb_j)^2\right|\right\}\\
	&\leq \max\left\{\left|\sum_{j=1}^n a_jb_j\right|^2, \max_{1\leq j<k \leq n}|a_jb_k-a_kb_j|^2\right\}.
	\end{align*}
\end{proof}
Previous theorems easily extend to infinite sequences. 
\begin{theorem}(\textbf{Non-Archimedean Cauchy-Schwarz Angle-Length Inequality}) Let $\mathbb{K}$ be a non-Archimedean valued field.
Let $\{a_j\}_{j=1}^\infty, \{b_j\}_{j=1}^\infty  \subseteq \mathbb{K}$. Assume that 
\begin{align*}
	\lim\limits_{j \to \infty }a_j=0 \quad \text{and} \quad 	\lim\limits_{j \to \infty }b_j=0.
\end{align*}
Then 
\begin{align*}
	\left|\sum_{j=1}^{\infty}a_jb_j\right|^2\leq \max\left\{\left|\sum_{j=1}^\infty a_j^2\right|\left|\sum_{k=1}^\infty b^2_k\right|, \max_{1\leq j<k <\infty}|a_jb_k-a_kb_j|^2\right\},
\end{align*}	
\end{theorem}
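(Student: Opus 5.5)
Since $a_j\to 0$ and $b_j\to 0$, we have $a_jb_j\to 0$, $a_j^2\to 0$ and $b_j^2\to 0$. I will implicitly assume, as one must for the series to make sense, that $\mathbb{K}$ is complete. Then all three series $\sum a_jb_j$, $\sum a_j^2$, $\sum b_k^2$ converge in $\mathbb{K}$, because in a complete non-Archimedean field a series converges iff its terms tend to zero. Moreover the set $\{|a_jb_k-a_kb_j|: j<k\}$ attains its supremum. Indeed $|a_jb_k-a_kb_j|\le \max\{|a_j||b_k|,|a_k||b_j|\}$, which tends to $0$ as $k\to\infty$ since the sequences are bounded. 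So only finitely many pairs can exceed any positive level, and the max is a genuine maximum (or everything is $0$).

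The plan is to apply Theorem \ref{NACS} to the truncations $(a_j)_{j=1}^N,(b_j)_{j=1}^N$. This gives, for every $N$,
\begin{align*}
\left|\sum_{j=1}^{N}a_jb_j\right|^2\leq \max\left\{\left|\sum_{j=1}^N a_j^2\right|\left|\sum_{k=1}^N b^2_k\right|, \max_{1\leq j<k \leq N}|a_jb_k-a_kb_j|^2\right\}.
\end{align*}
The inner finite maximum over $j<k\le N$ is bounded by $M:=\max_{1\le j<k<\infty}|a_jb_k-a_kb_j|^2$, so the right side is at most $\max\{|S_N^a||S_N^b|, M\}$, where $S_N^a$ and $S_N^b$ denote the partial sums of $\sum a_j^2$ and $\sum b_k^2$.

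Next I let $N\to\infty$. The absolute value is continuous, since $||x|-|y||\le|x-y|$. Hence the left side tends to $|\sum_{j=1}^\infty a_jb_j|^2$, and $|S_N^a||S_N^b|\to|\sum a_j^2||\sum b_k^2|$. The map $t\mapsto\max\{t,M\}$ is continuous and monotone, so passing to the limit in the inequality gives the claim. (In the non-Archimedean setting the partial sums' absolute values are in fact eventually constant when the limit is nonzero, which gives an alternative argument, but continuity suffices.)

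The only delicate point is justifying convergence and the existence of the maximum over infinitely many pairs, as handled above. If one prefers not to assume completeness, the statement should be read with the series assumed convergent, and the same truncation-and-limit argument applies verbatim.
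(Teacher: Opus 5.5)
Your proposal is correct and follows the argument the paper has in mind. The paper gives no separate proof here, only remarking that the finite theorems ``easily extend to infinite sequences'', but its proof of the infinite Chebyshev inequality uses exactly your scheme: apply the finite result to truncations, bound the finite maximum by the infinite one, and let $n\to\infty$. Your remarks on completeness of $\mathbb{K}$ (needed for the series to converge) and on the maximum over pairs $j<k$ actually being attained fill in points the paper leaves implicit.
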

\begin{theorem}(\textbf{Non-Archimedean Cauchy-Schwarz Angle-Length Inequality}) Let $\mathbb{K}$ be a non-Archimedean valued field.
	Let $\{a_j\}_{j=1}^\infty, \{b_j\}_{j=1}^\infty  \subseteq \mathbb{K}$. Assume that 
	\begin{align*}
		\lim\limits_{j \to \infty }a_j=0 \quad \text{and} \quad 	\lim\limits_{j \to \infty }b_j=0.
	\end{align*}
	Then 
	\begin{align*}
		\left|\sum_{j=1}^\infty a_j^2\right|\left|\sum_{k=1}^\infty b^2_k\right|\leq \max\left\{\left|\sum_{j=1}^\infty a_jb_j\right|^2, \max_{1\leq j<k <\infty }|a_jb_k-a_kb_j|^2\right\}.
	\end{align*}	
\end{theorem}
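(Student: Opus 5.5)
The plan is to reduce to the finite case (the preceding finite-$n$ theorem) and pass to the limit.

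\textbf{Convergence and the maximum.} Since $a_j\to 0$ and $b_j\to 0$, the terms $a_j^2$, $b_j^2$ and $a_jb_j$ also tend to $0$. In a non-Archimedean field a series converges exactly when its terms tend to zero; I take $\mathbb{K}$ complete, as is implicit in the statement. Hence all three series converge.

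The quantity $\max_{1\le j<k<\infty}|a_jb_k-a_kb_j|$ is a supremum that is actually attained. The bound $|a_jb_k-a_kb_j|\le \max\{|a_j||b_k|,|a_k||b_j|\}$ together with $a_j,b_j\to 0$ shows that for any $\varepsilon>0$ only finitely many pairs exceed $\varepsilon$. If all pairs vanish the maximum is $0$. Call this quantity $M$.

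\textbf{Finite truncations.} For each $N$, the previous theorem gives
\begin{align*}
\left|\sum_{j=1}^N a_j^2\right|\left|\sum_{k=1}^N b_k^2\right|\le \max\left\{\left|\sum_{j=1}^N a_jb_j\right|^2, \max_{1\le j<k\le N}|a_jb_k-a_kb_j|^2\right\}.
\end{align*}
The inner maximum over pairs up to $N$ is at most $M^2$. Therefore the right side is bounded by $\max\{|\sum_{j=1}^N a_jb_j|^2, M^2\}$.

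\textbf{Limit.} Let $N\to\infty$. The valuation is continuous, since $||x|-|y||\le |x-y|$. The function $\max$ is continuous on $[0,\infty)^2$. Partial sums converge to the full sums, so the left side tends to $|\sum a_j^2||\sum b_k^2|$ and the right side tends to $\max\{|\sum a_jb_j|^2, M^2\}$. Non-strict inequalities are preserved under limits, which gives the claim.

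An alternative that avoids limits is a direct Lagrange-type identity for the infinite sums. Its unconditional convergence in the non-Archimedean setting would need justification, so the truncation route above is cleaner.

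\textbf{Main obstacle.} The only delicate points are that the maximum over infinitely many pairs is well defined, and that completeness of $\mathbb{K}$ is needed for the sums to exist. I would state completeness as a standing assumption. Everything else is routine passage to the limit.
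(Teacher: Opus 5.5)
Your proof is correct and is essentially the paper's route: the paper gives no separate proof here (it only remarks that the finite theorems ``easily extend to infinite sequences''), and its one worked infinite case, the last Chebyshev theorem, uses your scheme. That scheme applies the finite inequality to partial sums, bounds the pairwise maximum up to $n$ by the maximum over all pairs, and lets $n\to\infty$. Your checks that the maximum over infinitely many pairs is attained, and that completeness of $\mathbb{K}$ is needed for the series to exist, cover points the paper leaves implicit.
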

We now derive non-Archimedean Chebyshev arithmetic mean inequalities.
\begin{theorem} (\textbf{Non-Archimedean Chebyshev Arithmetic Mean Inequality})
Let $n \in \mathbb{N}$ and $\mathbb{K}$ be a non-Archimedean valued field. Let $(a_j)_{j=1}^n, (b_j)_{j=1}^n \in \mathbb{K}^n$. Then 
\begin{align*}
		\left|\sum_{j=1}^{n}a_j\right|\left|\sum_{k=1}^{n}b_k\right|\leq 	 \max\left\{|n|\left|\sum_{j=1}^{n}a_jb_j\right|, \max_{1\leq j < k \leq n}|a_j-a_k||b_j-b_k|\right\}.
\end{align*}
In other words, 
\begin{align*}
		\left|AM(a_j)_{j=1}^n\right|\left|AM(b_j)_{j=1}^n\right|\leq \max\left\{	\left|AM(a_jb_j)_{j=1}^n\right|, \frac{1}{|n|^2}\max_{1\leq j < k \leq n}|a_j-a_k||b_j-b_k|\right\}.
\end{align*}
\end{theorem}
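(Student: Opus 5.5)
The plan is to imitate the proof of Theorem \ref{NACS}: replace the Lagrange identity by the Chebyshev (or Korkine) identity, and then apply the ultra-triangle inequality. The identity to use is
\begin{align*}
\sum_{1\leq j<k \leq n}(a_j-a_k)(b_j-b_k)=\frac{1}{2}\sum_{j=1}^{n}\sum_{k=1}^{n}(a_j-a_k)(b_j-b_k)=n\sum_{j=1}^{n}a_jb_j-\left(\sum_{j=1}^{n}a_j\right)\left(\sum_{k=1}^{n}b_k\right).
\end{align*}
It is a polynomial identity with integer coefficients, so it holds in every field $\mathbb{K}$. To verify it, I will expand the double sum, which equals $2n\sum_j a_jb_j-2\sum_j a_j\sum_k b_k$. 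One caveat is that I must avoid dividing by $2$ when the characteristic is $2$. So I will state and prove the identity directly in the form with $j<k$, checking it by comparing coefficients of each monomial $a_jb_k$. For $j\neq k$ the coefficient on both sides is $-1$. For $j=k$ the coefficient is $n-1$ on both sides.

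Rearranging the identity gives
\begin{align*}
\left(\sum_{j=1}^{n}a_j\right)\left(\sum_{k=1}^{n}b_k\right)=n\sum_{j=1}^{n}a_jb_j-\sum_{1\leq j<k \leq n}(a_j-a_k)(b_j-b_k).
\end{align*}
Next I take absolute values. Multiplicativity turns the left side into $\left|\sum_j a_j\right|\left|\sum_k b_k\right|$. On the right, the ultra-triangle inequality applies, using $|-x|=|x|$, which follows from $|-1|^2=|1|=1$. This bounds the right side by $\max\{|n|\,|\sum_j a_jb_j|,\ |\sum_{j<k}(a_j-a_k)(b_j-b_k)|\}$. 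Applying the ultra-triangle inequality once more to the finite sum bounds the second term by $\max_{j<k}|a_j-a_k||b_j-b_k|$. This gives the first inequality.

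For the "in other words" form, I divide both sides by $|n|^2$. This is allowed when $n\neq 0$ in $\mathbb{K}$, which is implicitly needed for $AM$ to be defined at all. Then $|\sum a_j||\sum b_k|/|n|^2=|AM(a_j)||AM(b_j)|$, and $|n||\sum a_jb_j|/|n|^2=|AM(a_jb_j)|$, which gives the stated bound.

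The only real obstacle is the characteristic issue in the identity, specifically the factor $\frac{1}{2}$. I handle it by verifying the $j<k$ form coefficient-wise, as described above. I will also note that if $n=0$ in $\mathbb{K}$, only the first form makes sense.
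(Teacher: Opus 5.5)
Your proposal is correct and follows essentially the same route as the paper: the Korkine identity $\left(\sum_j a_j\right)\left(\sum_k b_k\right)=n\sum_j a_jb_j-\sum_{j<k}(a_j-a_k)(b_j-b_k)$, then two applications of the ultra-triangle inequality, then division by $|n|^2$. Two of your additions tighten the paper's write-up. Verifying the $j<k$ form coefficient by coefficient avoids the paper's division by $2$, which fails in characteristic $2$. Your remark that the mean form requires $n\neq 0$ in $\mathbb{K}$ addresses a condition the paper leaves implicit.
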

\begin{proof}
Our proof is motivated by the arguments for the Archimedean case by Korkin \cite{MITRINOVICVASIC}. We have 
\begin{align}\label{1}
	\sum_{j=1}^{n}\sum_{k=1}^{n}(a_jb_j-a_jb_k)=n\sum_{j=1}^{n}a_jb_j-\left(\sum_{j=1}^{n}a_j\right)\left(\sum_{k=1}^{n}b_k\right)
\end{align}
and 
\begin{align}\label{2}
	\sum_{j=1}^{n}\sum_{k=1}^{n}(a_kb_k-a_kb_j)=n\sum_{k=1}^{n}a_kb_k-\left(\sum_{k=1}^{n}a_k\right)\left(\sum_{j=1}^{n}b_j\right).
\end{align}
By adding (\ref{1}) and (\ref{2}) we get 
\begin{align*}
\sum_{j=1}^{n}\sum_{k=1}^{n}(a_j-a_k)(b_j-b_k)	&=\sum_{j=1}^{n}\sum_{k=1}^{n}[(a_jb_j-a_jb_k)+(a_kb_k-a_kb_j)]\\
&=2 \left[n\sum_{j=1}^{n}a_jb_j-\left(\sum_{j=1}^{n}a_j\right)\left(\sum_{k=1}^{n}b_k\right)\right].
\end{align*}
Rearranging, we get 
\begin{align*}
\left(\sum_{j=1}^{n}a_j\right)\left(\sum_{k=1}^{n}b_k\right)&=n\sum_{j=1}^{n}a_jb_j-\frac{1}{2}\sum_{j=1}^{n}\sum_{k=1}^{n}(a_j-a_k)(b_j-b_k)\\
&=n\sum_{j=1}^{n}a_jb_j-\sum_{1\leq j<k\leq n}(a_j-a_k)(b_j-b_k).
\end{align*}
By taking non-Archimedean absolute value, we get 
\begin{align*}
	\left|\sum_{j=1}^{n}a_j\right|\left|\sum_{k=1}^{n}b_k\right|&=\left|n\sum_{j=1}^{n}a_jb_j-\sum_{1\leq j<k\leq n}(a_j-a_k)(b_j-b_k)	\right|\\
	&\leq \max\left\{\left|n\sum_{j=1}^{n}a_jb_j\right|, \left|\sum_{1\leq j<k\leq n}(a_j-a_k)(b_j-b_k)	\right|\right\}\\
	&=\max\left\{|n|\left|\sum_{j=1}^{n}a_jb_j\right|, \left|\sum_{1\leq j<k\leq n}(a_j-a_k)(b_j-b_k)\right|\right\}\\
	&\leq  \max\left\{|n|\left|\sum_{j=1}^{n}a_jb_j\right|, \max_{1\leq j < k \leq n}|a_j-a_k||b_j-b_k|\right\}.
\end{align*}
Dividing by $|n|^2$ completes the proof. 
\end{proof}
\begin{theorem} (\textbf{Non-Archimedean Chebyshev Arithmetic Mean Inequality})\label{NACAMI}
	Let $n \in \mathbb{N}$ and $\mathbb{K}$ be a non-Archimedean valued field.  Let $ r_1, \dots, r_n\in \mathbb{K}$ be such that $\sum_{j=1}^{n}r_j=1$. Let $(a_j)_{j=1}^n, (b_j)_{j=1}^n \in \mathbb{K}^n$. Then 
	\begin{align*}
		\left|\sum_{j=1}^{n}r_ja_j\right|\left|\sum_{k=1}^{n}r_kb_k\right|\leq 	 \max\left\{\left|\sum_{j=1}^{n}r_ja_jb_j\right|, \max_{1\leq j < k \leq n}|r_j||r_k||a_j-a_k||b_j-b_k|\right\}.
	\end{align*}
\end{theorem}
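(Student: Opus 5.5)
We argue exactly as in the proof of the preceding theorem, with weights inserted, following Korkin's symmetrisation argument. The key identity to establish is
\begin{align*}
\sum_{j=1}^{n}\sum_{k=1}^{n}r_jr_k(a_j-a_k)(b_j-b_k)=2\left[\left(\sum_{k=1}^{n}r_k\right)\sum_{j=1}^{n}r_ja_jb_j-\left(\sum_{j=1}^{n}r_ja_j\right)\left(\sum_{k=1}^{n}r_kb_k\right)\right].
\end{align*}
To obtain it, we first compute
\begin{align*}
\sum_{j,k}r_jr_k(a_jb_j-a_jb_k)=\left(\sum_k r_k\right)\sum_j r_ja_jb_j-\left(\sum_j r_ja_j\right)\left(\sum_k r_kb_k\right).
\end{align*}
The symmetric expression with $j$ and $k$ interchanged gives the same value. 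Adding the two yields the identity, since $(a_j-a_k)(b_j-b_k)=(a_jb_j-a_jb_k)+(a_kb_k-a_kb_j)$.

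Next we use the hypothesis $\sum_{k} r_k=1$, so the first term inside the bracket is just $\sum_j r_ja_jb_j$. The diagonal terms $j=k$ vanish and the summand is symmetric in $(j,k)$, so the double sum equals $2\sum_{1\le j<k\le n}r_jr_k(a_j-a_k)(b_j-b_k)$. Cancelling the factor $2$ on both sides is a polynomial identity, valid in any characteristic; one can also just expand the sum over $j<k$ directly without ever dividing. This gives
\begin{align*}
\left(\sum_{j=1}^{n}r_ja_j\right)\left(\sum_{k=1}^{n}r_kb_k\right)=\sum_{j=1}^{n}r_ja_jb_j-\sum_{1\leq j<k\leq n}r_jr_k(a_j-a_k)(b_j-b_k).
\end{align*}

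Finally, we take the non-Archimedean absolute value and use multiplicativity on the left. The ultra-triangle inequality applied first to the difference of the two terms, and then to the finite sum over $j<k$, gives
\begin{align*}
\left|\sum_{j}r_ja_j\right|\left|\sum_{k}r_kb_k\right|\le\max\left\{\left|\sum_j r_ja_jb_j\right|,\max_{1\le j<k\le n}|r_j||r_k||a_j-a_k||b_j-b_k|\right\},
\end{align*}
which is the claim. The only potential obstacle is ensuring the factor $2$ is not divided in characteristic $2$. This is avoided by proving the displayed identity over $j<k$ directly, by expanding and comparing coefficients, which works over any commutative ring.
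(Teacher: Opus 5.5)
Your proof is correct and follows the paper's argument: the weighted Korkin symmetrisation identity, specialised via $\sum_k r_k=1$, followed by the ultra-triangle inequality. Your extra care in verifying the $j<k$ form of the identity directly (the diagonal terms cancel and each pair $(j,k)$, $(k,j)$ combines to $r_jr_k(a_j-a_k)(b_j-b_k)$) is a small genuine improvement. The paper instead multiplies by $\frac{1}{2}$, which is not legitimate when $\mathbb{K}$ has characteristic $2$.
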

\begin{proof}
	We have 
	\begin{align}\label{3}
		\sum_{j=1}^{n}\sum_{k=1}^{n}r_jr_k(a_jb_j-a_jb_k)=\sum_{j=1}^{n}r_ja_jb_j-\left(\sum_{j=1}^{n}r_ja_j\right)\left(\sum_{k=1}^{n}r_kb_k\right)
	\end{align}
	and 
	\begin{align}\label{4}
		\sum_{j=1}^{n}\sum_{k=1}^{n}r_kr_j(a_kb_k-a_kb_j)=\sum_{k=1}^{n}r_ka_kb_k-\left(\sum_{k=1}^{n}r_ka_k\right)\left(\sum_{j=1}^{n}r_jb_j\right).
	\end{align}
	By adding (\ref{3}) and (\ref{4}) we get 
 	\begin{align*}
		\sum_{j=1}^{n}\sum_{k=1}^{n}r_jr_k(a_j-a_k)(b_j-b_k)	&=\sum_{j=1}^{n}\sum_{k=1}^{n}r_jr_k[(a_jb_j-a_jb_k)+(a_kb_k-a_kb_j)]\\
		&=2 \left[\sum_{j=1}^{n}r_ja_jb_j-\left(\sum_{j=1}^{n}r_ja_j\right)\left(\sum_{k=1}^{n}r_kb_k\right)\right].
	\end{align*}
	Rearranging, we get
	\begin{align*}
		\left(\sum_{j=1}^{n}r_ja_j\right)\left(\sum_{k=1}^{n}r_kb_k\right)&=\sum_{j=1}^{n}r_ja_jb_j-\frac{1}{2}\sum_{j=1}^{n}\sum_{k=1}^{n}r_jr_k(a_j-a_k)(b_j-b_k)\\
		&=\sum_{j=1}^{n}r_ja_jb_j-\sum_{1\leq j<k\leq n}r_jr_k(a_j-a_k)(b_j-b_k).
	\end{align*}
	By taking non-Archimedean absolute value, we get 
	\begin{align*}
		\left|\sum_{j=1}^{n}r_ja_j\right|\left|\sum_{k=1}^{n}r_kb_k\right|&=\left|\sum_{j=1}^{n}r_ja_jb_j-\sum_{1\leq j<k\leq n}r_jr_k(a_j-a_k)(b_j-b_k)	\right|\\
		&\leq \max\left\{\left|\sum_{j=1}^{n}r_ja_jb_j\right|, \left|\sum_{1\leq j<k\leq n}r_jr_k(a_j-a_k)(b_j-b_k)	\right|\right\}\\
		&=\max\left\{\left|\sum_{j=1}^{n}r_ja_jb_j\right|, \left|\sum_{1\leq j<k\leq n}r_jr_k(a_j-a_k)(b_j-b_k)\right|\right\}\\
		&\leq  \max\left\{\left|\sum_{j=1}^{n}r_ja_jb_j\right|, \max_{1\leq j < k \leq n}|r_j||r_k||a_j-a_k||b_j-b_k|\right\}.
	\end{align*}
\end{proof}
\begin{corollary}(\textbf{Non-Archimedean Chebyshev Arithmetic Mean Inequality})
	Let $n \in \mathbb{N}$ and $\mathbb{K}$ be a non-Archimedean valued field.  Let $ r_1, \dots, r_n\in \mathbb{K}$. Let $(a_j)_{j=1}^n, (b_j)_{j=1}^n \in \mathbb{K}^n$. Then 
	\begin{align*}
		\left|\sum_{j=1}^{n}r_ja_j\right|\left|\sum_{k=1}^{n}r_kb_k\right|\leq 	 \max\left\{\left|\sum_{k=1}^{n}r_k\right|\left|\sum_{j=1}^{n}r_ja_jb_j\right|, \max_{1\leq j < k \leq n}|r_j||r_k||a_j-a_k||b_j-b_k|\right\}.
	\end{align*}
	
\end{corollary}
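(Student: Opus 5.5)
Rather than normalising the weights and invoking Theorem \ref{NACAMI}, I would rerun its Korkin-type identity without assuming $\sum_{j=1}^{n} r_j = 1$. Normalising would require $\sum r_k \neq 0$; the direct identity avoids this case split entirely.

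Put $R = \sum_{k=1}^{n} r_k$. The first step is to expand
\begin{align*}
\sum_{j=1}^{n}\sum_{k=1}^{n} r_jr_k(a_jb_j - a_jb_k) = R\sum_{j=1}^{n} r_ja_jb_j - \left(\sum_{j=1}^{n} r_ja_j\right)\left(\sum_{k=1}^{n} r_kb_k\right).
\end{align*}
Interchanging the roles of $j$ and $k$ gives the symmetric identity, with the same right-hand side. Adding the two gives
\begin{align*}
\sum_{j=1}^{n}\sum_{k=1}^{n} r_jr_k(a_j-a_k)(b_j-b_k) = 2\left[R\sum_{j=1}^{n} r_ja_jb_j - \left(\sum_{j=1}^{n} r_ja_j\right)\left(\sum_{k=1}^{n} r_kb_k\right)\right].
\end{align*}
The summand is symmetric in $(j,k)$ and vanishes when $j=k$. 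So the left side equals $2\sum_{1\leq j<k\leq n} r_jr_k(a_j-a_k)(b_j-b_k)$ as an identity of polynomials with integer coefficients. The factor $2$ therefore cancels without ever dividing by $2$ in $\mathbb{K}$, which is the only point needing care in characteristic $2$.

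This yields
\begin{align*}
\left(\sum_{j=1}^{n} r_ja_j\right)\left(\sum_{k=1}^{n} r_kb_k\right) = R\sum_{j=1}^{n} r_ja_jb_j - \sum_{1\leq j<k\leq n} r_jr_k(a_j-a_k)(b_j-b_k).
\end{align*}
Taking absolute values, I would apply multiplicativity to the product on the left and to $R\sum_{j} r_ja_jb_j$. Applying the ultra-triangle inequality once to the difference and once more to the finite sum over $j<k$ then gives exactly the claimed bound.

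I expect no real obstacle here. The only subtle step is justifying the cancellation of $2$ by direct pairing rather than by division.
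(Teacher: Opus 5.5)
Your proof is correct, but it does not follow the route the paper implies. The paper states this corollary without proof right after Theorem \ref{NACAMI}, so the intended argument is the normalisation you set aside. With $R=\sum_{k=1}^n r_k$, one applies Theorem \ref{NACAMI} to the weights $r_j/R$ and multiplies through by $|R|^2$.

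Given the theorem, that is a one-line reduction, but it needs $R\neq 0$. The corollary's hypotheses allow $R=0$, e.g.\ $r_1=1$, $r_2=-1$, or $p$ equal weights when $\operatorname{char}\mathbb{K}=p$. In that case one has to fall back on exactly your identity, which then reads $\left(\sum_j r_ja_j\right)\left(\sum_k r_kb_k\right)=-\sum_{j<k}r_jr_k(a_j-a_k)(b_j-b_k)$. Redoing the Korkin identity with general $R$ therefore covers every case at once, at the cost of repeating a computation the normalisation route gets for free.

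Your approach also fixes a flaw in the paper's own derivation in Theorem \ref{NACAMI}. That derivation uses a factor $\frac{1}{2}$, so it is not literally valid in characteristic $2$. Your cancellation of $2$ is legitimate provided the whole computation takes place in the torsion-free ring $\mathbb{Z}[r_1,\dots,r_n,a_1,\dots,a_n,b_1,\dots,b_n]$ before specialising to $\mathbb{K}$; state this explicitly.

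A simpler alternative is to split each double sum into its parts with $j<k$, $j=k$ and $j>k$. The diagonal terms $r_j^2a_jb_j$ cancel, and the remaining terms combine to $\sum_{j<k}r_jr_k(a_j-a_k)(b_j-b_k)$ in any commutative ring. No factor $2$ ever appears.
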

\begin{theorem}(\textbf{Non-Archimedean Chebyshev Arithmetic Mean Inequality})
Let  $\mathbb{K}$ be a non-Archimedean valued field.  Let $ \{r_j\}_{j=1}^\infty\subseteq \mathbb{K}$ be such that $\sum_{j=1}^{\infty}r_j\in \mathbb{K}$. Let $\{a_j\}_{j=1}^\infty, \{b_j\}_{j=1}^\infty  \subseteq \mathbb{K}$. Assume that 
\begin{align*}
	\lim\limits_{j \to \infty }a_j=0 \quad \text{and} \quad 	\lim\limits_{j \to \infty }b_j=0.
\end{align*}
Then 
\begin{align*}
	\left|\sum_{j=1}^{\infty}r_ja_j\right|\left|\sum_{k=1}^{\infty}r_kb_k\right|\leq 	 \max\left\{\left|\sum_{j=1}^{\infty}r_ja_jb_j\right|, \max_{1\leq j < k <\infty }|r_j||r_k||a_j-a_k||b_j-b_k|\right\}.
\end{align*}	
\end{theorem}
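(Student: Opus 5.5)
The plan is to reduce to the finite case, using the Corollary that follows Theorem \ref{NACAMI} on the partial sums, and then pass to the limit $N\to\infty$. One remark first. As printed, the statement has no normalisation of $\sum_{j=1}^\infty r_j$. A single nonzero weight $r_1=c$ with $|c|>1$ gives a counterexample: the left side is $|c|^2|a_1||b_1|$ and the right side is $|c||a_1b_1|$. So I will prove the version with the factor $\left|\sum_{k=1}^\infty r_k\right|$ in front of $\left|\sum_{j=1}^\infty r_ja_jb_j\right|$, as in the finite Corollary. This reduces to the displayed inequality exactly when $\sum_{j=1}^\infty r_j=1$, the analogue of Theorem \ref{NACAMI}. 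Throughout, $\mathbb{K}$ is taken complete, which is implicit whenever infinite series in $\mathbb{K}$ are spoken of.

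\emph{Convergence.} In a complete non-Archimedean field a series converges if and only if its terms tend to $0$. Since $\sum_j r_j$ converges, $r_j\to 0$. Also $a_j\to0$ and $b_j\to 0$, so these sequences are bounded, say by $M$. Hence $r_ja_j$, $r_jb_j$ and $r_ja_jb_j$ all tend to $0$, and the three series $\sum r_ja_j$, $\sum r_kb_k$, $\sum r_ja_jb_j$ converge.

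\emph{Truncation.} For each $N$, the finite Corollary applied to $(r_j)_{j=1}^N,(a_j)_{j=1}^N,(b_j)_{j=1}^N$ gives
\begin{align*}
\left|\sum_{j=1}^{N}r_ja_j\right|\left|\sum_{k=1}^{N}r_kb_k\right|\leq \max\left\{\left|\sum_{k=1}^{N}r_k\right|\left|\sum_{j=1}^{N}r_ja_jb_j\right|, \max_{1\leq j<k\leq N}|r_j||r_k||a_j-a_k||b_j-b_k|\right\}.
\end{align*}
The inner maximum is nondecreasing in $N$ and is bounded above by $T:=\sup_{1\le j<k<\infty}|r_j||r_k||a_j-a_k||b_j-b_k|$.

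\emph{Passing to the limit.} The absolute value is continuous, so the left side tends to the product of the infinite absolute values. The first entry of the maximum likewise tends to $\left|\sum_k r_k\right|\left|\sum_j r_ja_jb_j\right|$. Since $\max\{x,y\}$ is continuous and monotone in each variable, the limit gives the inequality with $T$ in place of the inner maximum.

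The main obstacle is justifying that $T$ is really a maximum, as the notation $\max_{1\le j<k<\infty}$ asserts. Each term is at most $|r_k|\,(\sup_i|r_i|)\,M^2$, and since $r_k\to0$ the terms tend to $0$ as $k\to\infty$. So if $T>0$, only finitely many pairs $(j,k)$ have term exceeding $T/2$, and the supremum is attained among them. If $T=0$, every term is $0$ and the maximum is trivially attained. This completes the plan.
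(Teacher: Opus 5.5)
Your counterexample is valid: the printed statement needs a normalization such as $\sum_{j}r_j=1$, and your weighted version shows that $|\sum_j r_j|\le 1$ suffices. Your proof of the weighted version (truncate, apply the finite Corollary, pass to the limit) is correct and is essentially the paper's route; the paper gives no separate proof here, but its proof of the companion reverse inequality uses the same truncate-and-limit scheme, except that after ``without loss of generality $\sum_{j=1}^\infty r_j=1$'' it applies the normalized identity to partial sums $\sum_{k=1}^n r_k$ that need not equal $1$. Your use of the factor $\left|\sum_{k=1}^N r_k\right|$ avoids that problem.
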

\begin{corollary}(\textbf{Non-Archimedean Chebyshev Arithmetic Mean Inequality})
Let  $\mathbb{K}$ be a non-Archimedean valued field.  Let $ \{r_j\}_{j=1}^\infty\subseteq \mathbb{K}$ be such that $\lim\limits_{j \to \infty }r_j=0$. Let $\{a_j\}_{j=1}^\infty, \{b_j\}_{j=1}^\infty  \subseteq \mathbb{K}$. Assume that 
\begin{align*}
	\lim\limits_{j \to \infty }a_j=0 \quad \text{and} \quad 	\lim\limits_{j \to \infty }b_j=0.
\end{align*}
Then 
\begin{align*}
	\left|\sum_{j=1}^{\infty}r_ja_j\right|\left|\sum_{k=1}^{\infty}r_kb_k\right|\leq 	 \max\left\{\left|\sum_{k=1}^{\infty}r_k\right|\left|\sum_{j=1}^{\infty}r_ja_jb_j\right|, \max_{1\leq j < k <\infty }|r_j||r_k||a_j-a_k||b_j-b_k|\right\}.
\end{align*}		
\end{corollary}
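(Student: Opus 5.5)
The argument repeats the finite weighted Korkin computation in the proof of Theorem \ref{NACAMI}, without the normalisation $\sum r_j=1$, and then passes to the limit. Since all three sequences tend to zero, the three series $\sum_j r_ja_j$, $\sum_k r_kb_k$, $\sum_j r_ja_jb_j$ converge in $\mathbb{K}$. (I take $\mathbb{K}$ complete, as is implicit in the statement, so that $\sum_{k} r_k$ makes sense.) Recall that in a non-Archimedean field a series converges exactly when its terms tend to zero.

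First, for each $N$, I expand the double sum exactly as in (\ref{3}) and (\ref{4}), but keep the total weight $R_N:=\sum_{k=1}^N r_k$ instead of $1$. This gives the identity
\begin{align*}
\left(\sum_{j=1}^{N}r_ja_j\right)\left(\sum_{k=1}^{N}r_kb_k\right)=R_N\sum_{j=1}^{N}r_ja_jb_j-\sum_{1\leq j<k\leq N}r_jr_k(a_j-a_k)(b_j-b_k).
\end{align*}
The factor $\frac12$ cancels against the symmetrisation over $j<k$, so no division by $2$ is needed and characteristic $2$ causes no trouble. Applying the ultra-triangle inequality twice yields the finite bound
\begin{align*}
\left|\sum_{j=1}^{N}r_ja_j\right|\left|\sum_{k=1}^{N}r_kb_k\right|\leq \max\left\{|R_N|\left|\sum_{j=1}^{N}r_ja_jb_j\right|, \max_{1\leq j<k\leq N}|r_j||r_k||a_j-a_k||b_j-b_k|\right\}.
\end{align*}
This is the finite corollary preceding the statement, so I may simply invoke it.

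Second, I let $N\to\infty$. The left side converges to the desired product because $|\cdot|$ is continuous. The term $|R_N|\,|\sum_{j\le N} r_ja_jb_j|$ converges to $|\sum_k r_k|\,|\sum_j r_ja_jb_j|$. The inner maxima are nondecreasing in $N$ and bounded by the infinite supremum $M:=\sup_{j<k}|r_j||r_k||a_j-a_k||b_j-b_k|$. This supremum is finite because each sequence is bounded.

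The main obstacle is the limit of the right-hand maximum. The clean way through is to bound the $N$-th right side by $\max\{|R_N||\sum_{j\le N}r_ja_jb_j|, M\}$ and pass to the limit in that expression, which is continuous in the first argument. This gives the inequality with $M$ as a supremum. I would also check that $M$ is actually attained, so that writing $\max$ is justified. If $M>0$, only finitely many pairs have $|r_j||r_k||a_j-a_k||b_j-b_k|>M/2$, because $r_j\to0$ while $|a_j-a_k|,|b_j-b_k|$ stay bounded. Hence the supremum is a maximum over a finite set. If $M=0$, the bound holds trivially.
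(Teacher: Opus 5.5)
Your proof is correct and follows essentially the paper's route: the weighted Korkin identity on the $N$-th partial sums, the ultra-triangle inequality, and then $N\to\infty$, as in the paper's proof of the final infinite Chebyshev theorem. Keeping $R_N=\sum_{k\le N}r_k$ rather than normalising to $1$, and writing the identity directly over $j<k$ so that no division by $2$ occurs, is a small improvement: it also covers $\sum_k r_k=0$ (where a rescaling argument breaks down) and characteristic $2$, and your check that the supremum over pairs is attained justifies the $\max$ that the paper takes for granted.
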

\begin{theorem}
(\textbf{Non-Archimedean Chebyshev Arithmetic Mean Inequality})
Let  $\mathbb{K}$ be a non-Archimedean valued field.  Let $ \{r_j\}_{j=1}^\infty\subseteq \mathbb{K}$ be such that $\lim\limits_{j \to \infty }r_j=0$. Let $\{a_j\}_{j=1}^\infty, \{b_j\}_{j=1}^\infty  \subseteq \mathbb{K}$. Assume that 
\begin{align*}
	\lim\limits_{j \to \infty }a_j=0 \quad \text{and} \quad 	\lim\limits_{j \to \infty }b_j=0.
\end{align*}
Then 
\begin{align*}
	\left|\sum_{j=1}^{\infty}r_ja_jb_j\right|\leq \max\left\{\left|\sum_{j=1}^{\infty}r_ja_j\right|\left|\sum_{k=1}^{\infty}r_kb_k\right|, \max_{1\leq j<k<\infty}|r_jr_k(a_j-a_k)(b_j-b_k)|\right\}.
\end{align*}
\end{theorem}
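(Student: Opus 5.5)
The plan is to follow the proof of Theorem \ref{NACAMI} (the Korkin-type identity), but to read the identity in the opposite direction, and then pass from finite sums to infinite sums. For each $N\in\mathbb{N}$, summing the analogues of (\ref{3}) and (\ref{4}) over $1\leq j,k\leq N$ gives
\begin{align*}
\left(\sum_{k=1}^{N}r_k\right)\left(\sum_{j=1}^{N}r_ja_jb_j\right)=\left(\sum_{j=1}^{N}r_ja_j\right)\left(\sum_{k=1}^{N}r_kb_k\right)+\sum_{1\leq j<k\leq N}r_jr_k(a_j-a_k)(b_j-b_k).
\end{align*}
This is a purely algebraic identity; halving the double sum to get the sum over $j<k$ is legitimate here, as in the paper's earlier proofs.

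\textbf{A normalising assumption is needed.} As stated, the theorem has no hypothesis on $\sum_{k}r_k$, but some normalisation is necessary. For instance, take $r_1=r$ and $r_j=0$ for $j\geq 2$. Then the claim reads $|r||a_1||b_1|\leq |r|^2|a_1||b_1|$, which fails whenever $|r|<1$ and $a_1b_1\neq 0$. I would therefore prove the statement under the hypothesis $\sum_{j=1}^\infty r_j=1$, exactly as in Theorem \ref{NACAMI}. Without that hypothesis, I would prove the corrected version in which the left side is multiplied by $\left|\sum_{k=1}^\infty r_k\right|$.

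\textbf{Passing to the limit.} Assume $\mathbb{K}$ is complete, which is implicit since the infinite sums are meant to exist. Because $r_j\to 0$ and $a_j,b_j\to 0$, the terms $r_ja_j$, $r_kb_k$ and $r_ja_jb_j$ tend to $0$. In a complete non-Archimedean field this is enough for all three series to converge. So, letting $N\to\infty$, the left side and the first term on the right converge to their infinite counterparts. Consequently the partial double sums
\[
D_N=\sum_{1\leq j<k\leq N}r_jr_k(a_j-a_k)(b_j-b_k)
\]
converge to some $D\in\mathbb{K}$, and the identity holds in the limit.

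\textbf{Bounding $|D|$.} By the ultra-triangle inequality,
\[
|D_N|\leq M:=\sup_{1\leq j<k<\infty}|r_j||r_k||a_j-a_k||b_j-b_k|.
\]
This supremum is finite because all the sequences involved are bounded. It is in fact a maximum: the terms tend to $0$ as $k\to\infty$, uniformly in $j$, since $|r_k|\to 0$ and $|a_j-a_k|,|b_j-b_k|$ are bounded. Continuity of $|\cdot|$ then gives $|D|\leq M$.

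\textbf{Conclusion.} Taking absolute values in the limiting identity (with $\sum r_k=1$) and using the ultra-triangle inequality once more gives
\[
\left|\sum_{j=1}^\infty r_ja_jb_j\right|\leq\max\left\{\left|\sum_{j=1}^\infty r_ja_j\right|\left|\sum_{k=1}^\infty r_kb_k\right|,\,M\right\},
\]
which is the claim.

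\textbf{Main obstacle.} The algebra is routine; the delicate step is the limiting argument. It has two parts: justifying convergence of the double sum, which I would obtain from the identity itself rather than directly, and showing that the supremum over infinitely many pairs is attained. The latter uses the uniform decay in $k$ described above.
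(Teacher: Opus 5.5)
Your proof is correct and follows essentially the paper's route: the Korkin-type identity for partial sums, the ultra-triangle inequality, and a limit $N\to\infty$; you are more careful than the paper in keeping the factor $\sum_{k=1}^{N}r_k$ in the finite identity and in checking that the maximum over infinitely many pairs is attained. Your counterexample is also right: the paper opens with ``without loss of generality $\sum_{j=1}^{\infty}r_j=1$'', but replacing $r_j$ by $cr_j$ scales the left side by $|c|$ and the right side by $|c|^2$, so that step is not harmless, and what the argument actually proves is the normalised statement, equivalently the version with $\left|\sum_{k=1}^{\infty}r_k\right|$ multiplying the left-hand side.
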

\begin{proof}
	Without loss of generality, we assume that $\sum_{j=1}^{\infty}r_j=1$. Let $n \in \mathbb{N}$. By doing a similar calculation as in the proof of Theorem \ref{NACAMI}, we get 
		\begin{align*}
		\sum_{j=1}^{n}\sum_{k=1}^{n}r_jr_k(a_j-a_k)(b_j-b_k)	&=\sum_{j=1}^{n}\sum_{k=1}^{n}r_jr_k[(a_jb_j-a_jb_k)+(a_kb_k-a_kb_j)]\\
		&=2 \left[\sum_{j=1}^{n}r_ja_jb_j-\left(\sum_{j=1}^{n}r_ja_j\right)\left(\sum_{k=1}^{n}r_kb_k\right)\right].
	\end{align*}
	Rearranging, we get 
	\begin{align*}
	\sum_{j=1}^{n}r_ja_jb_j&=\left(\sum_{j=1}^{n}r_ja_j\right)\left(\sum_{k=1}^{n}r_kb_k\right)+\frac{1}{2}\sum_{j=1}^{n}\sum_{k=1}^{n}r_jr_k(a_j-a_k)(b_j-b_k)\\
	&=	\left(\sum_{j=1}^{n}r_ja_j\right)\left(\sum_{k=1}^{n}r_kb_k\right)+\sum_{1\leq j<k\leq n}r_jr_k(a_j-a_k)(b_j-b_k).	
	\end{align*}
		By taking non-Archimedean absolute value, we get 
			\begin{align*}
			\left|\sum_{j=1}^{n}r_ja_jb_j\right|&=	\left|\left(\sum_{j=1}^{n}r_ja_j\right)\left(\sum_{k=1}^{n}r_kb_k\right)+\sum_{1\leq j<k\leq n}r_jr_k(a_j-a_k)(b_j-b_k)\right|\\
			&\leq \max\left\{\left|\sum_{j=1}^{n}r_ja_j\right|\left|\sum_{k=1}^{n}r_kb_k\right|, \left|\sum_{1\leq j<k\leq n}r_jr_k(a_j-a_k)(b_j-b_k)\right|\right\}\\
			&\leq  \max\left\{\left|\sum_{j=1}^{n}r_ja_j\right|\left|\sum_{k=1}^{n}r_kb_k\right|, \max_{1\leq j<k\leq n}|r_jr_k(a_j-a_k)(b_j-b_k)|\right\}\\
			&\leq \max\left\{\left|\sum_{j=1}^{n}r_ja_j\right|\left|\sum_{k=1}^{n}r_kb_k\right|, \max_{1\leq j<k<\infty}|r_jr_k(a_j-a_k)(b_j-b_k)|\right\}.
		\end{align*}
		By taking limit as  $n \to \infty$, we get the result. 
\end{proof}
\begin{corollary} (\textbf{Non-Archimedean Chebyshev Arithmetic Mean Inequality})
	Let $n \in \mathbb{N}$ and $\mathbb{K}$ be a non-Archimedean valued field. Let $(a_j)_{j=1}^n, (b_j)_{j=1}^n \in \mathbb{K}^n$. Then 
	\begin{align*}
	\left|\sum_{j=1}^{n}a_jb_j\right|\leq \frac{1}{|n|}\max\left\{	\left|\sum_{j=1}^{n}a_j\right|\left|\sum_{k=1}^{n}b_k\right|, \max_{1\leq j < k \leq n}|a_j-a_k||b_j-b_k|\right\}. 
	\end{align*}
	\begin{align*}
		\left|\sum_{j=1}^{n}a_j\right|\left|\sum_{k=1}^{n}b_k\right|\leq 	 \max\left\{|n|\left|\sum_{j=1}^{n}a_jb_j\right|, \max_{1\leq j < k \leq n}|a_j-a_k||b_j-b_k|\right\}.
	\end{align*}
	In other words, 
	\begin{align*}
		\left|AM(a_jb_j)_{j=1}^n\right|\leq \max\left\{	\left|AM(a_j)_{j=1}^n\right|\left|AM(b_j)_{j=1}^n\right|, \frac{1}{|n|^2}\max_{1\leq j < k \leq n}|a_j-a_k||b_j-b_k|\right\}. 
	\end{align*}
\end{corollary}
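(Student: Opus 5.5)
The plan is to reuse the Korkin-type identity already established in the proof of the first non-Archimedean Chebyshev inequality:
\begin{align*}
n\sum_{j=1}^{n}a_jb_j=\left(\sum_{j=1}^{n}a_j\right)\left(\sum_{k=1}^{n}b_k\right)+\sum_{1\leq j<k\leq n}(a_j-a_k)(b_j-b_k).
\end{align*}
This identity is the rearrangement of the sum of (\ref{1}) and (\ref{2}). One point needs care: the passage from $\frac{1}{2}\sum_{j,k}$ to $\sum_{j<k}$ does not require dividing by $2$. The double sum is symmetric in $(j,k)$ and its diagonal terms vanish, so it equals twice the sum over $j<k$ directly. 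The identity therefore holds in any field, including characteristic $2$.

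Next I take the non-Archimedean absolute value of both sides. Multiplicativity gives $|n|\left|\sum_{j}a_jb_j\right|$ on the left. The ultra-triangle inequality, applied once to the two summands and then repeatedly to the finite sum over $j<k$, bounds the right-hand side by
\begin{align*}
\max\left\{\left|\sum_{j=1}^{n}a_j\right|\left|\sum_{k=1}^{n}b_k\right|, \max_{1\leq j<k\leq n}|a_j-a_k||b_j-b_k|\right\}.
\end{align*}
This step is verbatim the estimate used in the earlier proofs.

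The only real obstacle is dividing by $|n|$. This requires $n\neq 0$ in $\mathbb{K}$, that is, $\operatorname{char}\mathbb{K}\nmid n$. The statement implicitly assumes this, since $\frac{1}{|n|}$ and $AM$ are otherwise undefined, and I would state it explicitly. Under that assumption, dividing by $|n|$ gives the first inequality.

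The second displayed inequality is exactly the earlier non-Archimedean Chebyshev theorem, so I would simply cite it. For the ``in other words'' form, I divide the first inequality by $|n|$ once more. This turns the left side into $|AM(a_jb_j)_{j=1}^n|$. On the right, $\frac{1}{|n|^2}\left|\sum_j a_j\right|\left|\sum_k b_k\right|=|AM(a_j)_{j=1}^n||AM(b_j)_{j=1}^n|$, and the difference term acquires the factor $\frac{1}{|n|^2}$, as claimed.
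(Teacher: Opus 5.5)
Your argument is correct and follows the paper's route. The paper states this corollary without a separate proof; it follows from the same Korkin-type identity $n\sum_{j}a_jb_j=\left(\sum_{j}a_j\right)\left(\sum_{k}b_k\right)+\sum_{j<k}(a_j-a_k)(b_j-b_k)$, which is also the $r_j=1/n$ case of the weighted identity in the proof of the preceding theorem, together with the ultra-triangle inequality and division by $|n|$, exactly as you do.

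One small correction concerns your characteristic-$2$ aside. Adding (\ref{1}) and (\ref{2}) only gives $2\sum_{j<k}(a_j-a_k)(b_j-b_k)=2\bigl[n\sum_j a_jb_j-\left(\sum_j a_j\right)\left(\sum_k b_k\right)\bigr]$, which says nothing when $2=0$, so your stated reason does not establish the identity in that case. Instead use (\ref{1}) alone: its diagonal terms vanish, and the $(j,k)$ and $(k,j)$ terms pair to $(a_j-a_k)(b_j-b_k)$, which gives the identity over any field with no factor $2$.
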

\section{An Open Problem}
In 1972, Buzano generalized  Cauchy-Schwarz angle-length inequality.
\begin{theorem} \cite{STEELE, FUJIIKUBO} (\textbf{Buzano Angle-Length Inequality}) Let $n \in \mathbb{N}$. Then 
	\begin{align*}
		2\left|\sum_{j=1}^{n}a_jb_j\right|\left|\sum_{k=1}^{n}b_kc_k\right|\leq& \left[ \left(\sum_{j=1}^{n}a_j^2\right)^\frac{1}{2}\left(\sum_{k=1}^{n}c_k^2\right)^\frac{1}{2}+\left|\sum_{j=1}^{n}a_jc_j\right|\right]\sum_{r=1}^{n}b_r^2, \\
		& \forall (a_j)_{j=1}^n, (b_j)_{j=1}^n, (c_j)_{j=1}^n \in \mathbb{R}^n.
	\end{align*}
	\end{theorem}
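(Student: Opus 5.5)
The plan is to reduce the statement to the classical Cauchy--Schwarz inequality of Theorem \ref{CSI}, applied not to $a$ and $c$ themselves but to $a$ and a reflected copy of $c$. Throughout write $\langle x,y\rangle\coloneqq\sum_{j=1}^n x_jy_j$ and $\|x\|\coloneqq\langle x,x\rangle^{1/2}$ for $x,y\in\mathbb{R}^n$. In this notation the claim reads
\begin{align*}
2|\langle a,b\rangle||\langle b,c\rangle|\leq \left(\|a\|\|c\|+|\langle a,c\rangle|\right)\|b\|^2 .
\end{align*}

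First I dispose of the case $b=0$, where both sides vanish. Otherwise both sides are homogeneous of degree $2$ in $b$, so I may replace $b$ by $e\coloneqq b/\|b\|$ and assume $\|e\|=1$. It then suffices to prove
\begin{align*}
2|\langle a,e\rangle||\langle e,c\rangle|\leq \|a\|\|c\|+|\langle a,c\rangle|.
\end{align*}

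The key construction is the reflection vector $d\coloneqq 2\langle e,c\rangle e-c$. Expanding and using $\|e\|=1$ gives
\begin{align*}
\|d\|^2=4\langle e,c\rangle^2-4\langle e,c\rangle^2+\|c\|^2=\|c\|^2,
\end{align*}
so $\|d\|=\|c\|$. Applying Theorem \ref{CSI} to $a$ and $d$ yields $|\langle a,d\rangle|\leq\|a\|\|c\|$. Since $\langle a,d\rangle=2\langle a,e\rangle\langle e,c\rangle-\langle a,c\rangle$, this says
\begin{align*}
\left|2\langle a,e\rangle\langle e,c\rangle-\langle a,c\rangle\right|\leq \|a\|\|c\|.
\end{align*}

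Finally the triangle inequality gives
\begin{align*}
2|\langle a,e\rangle\langle e,c\rangle|\leq \left|2\langle a,e\rangle\langle e,c\rangle-\langle a,c\rangle\right|+|\langle a,c\rangle|\leq\|a\|\|c\|+|\langle a,c\rangle|.
\end{align*}
Multiplying back by $\|b\|^2$ gives the stated inequality.

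The only nonroutine step is spotting the reflection $d=2\langle e,c\rangle e-c$, which is the action on $c$ of the operator $2ee^{T}-I$ of norm one. Once it is chosen, the norm identity is a one-line expansion and everything else is Theorem \ref{CSI} plus the ordinary triangle inequality. No ordering or sign conditions on the sequences are needed.
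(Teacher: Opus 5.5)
Your proof is correct. The paper gives no proof of this statement. It is quoted from Steele and Fujii--Kubo only to motivate the open problem at the end, so there is no argument in the paper to compare with. What you wrote is essentially the Fujii--Kubo proof: $2ee^{T}-I$ is an orthogonal reflection and so has norm one, and Cauchy--Schwarz for $a$ and the reflected $c$, followed by the triangle inequality, yields Buzano. The $b=0$ case, the homogeneity reduction, the identity $\|d\|=\|c\|$ and the expansion of $\langle a,d\rangle$ all check out.

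Your route is also useful in the context of this paper, which remarks that the Cauchy--Binet identity does not seem to give a non-Archimedean Buzano inequality. Your argument does not transfer verbatim. Over a non-Archimedean field, $\sum_j b_j^2$ can vanish for $b\neq 0$ (e.g.\ $b=(1,i)$ with $i\in\mathbb{Q}_5$, $i^2=-1$), and square roots need not exist, so $e=b/\|b\|$ is unavailable. The normalization is inessential, though. Take $\langle x,y\rangle=\sum_j x_jy_j$ as a bilinear form over $\mathbb{K}$ and set $d\coloneqq 2\langle b,c\rangle b-\langle b,b\rangle c$. Then the identities $\langle d,d\rangle=\langle b,b\rangle^2\langle c,c\rangle$ and $2\langle a,b\rangle\langle b,c\rangle=\langle a,d\rangle+\langle b,b\rangle\langle a,c\rangle$ hold over any field. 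Combining them with the ultra-triangle inequality and Theorem~\ref{NACS} applied to $a$ and $d$ gives
\begin{align*}
|2||\langle a,b\rangle||\langle b,c\rangle|\leq \max\left\{|\langle a,a\rangle|^{\frac{1}{2}}|\langle c,c\rangle|^{\frac{1}{2}}|\langle b,b\rangle|,\ |\langle a,c\rangle||\langle b,b\rangle|,\ \max_{1\leq j<k\leq n}|a_jd_k-a_kd_j|\right\}.
\end{align*}
For $c=a$ one has $a_jd_k-a_kd_j=2\langle a,b\rangle(a_jb_k-a_kb_j)$. When $|2|=1$, the display then reduces to the first inequality of Theorem~\ref{NACS}. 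So your reflection idea gives a reasonable candidate answer to the paper's open problem.
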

The Buzano angle-length inequality leads to the following problem.
\begin{problem}
What is the non-Archimedean version of Buzano angle-length inequality (which generalizes non-Archimedean Cauchy-Schwarz angle-length inequality)?
\end{problem}
Note that the Cauchy-Benet identity
\begin{align*}
\left(\sum_{j=1}^{n}a_jc_j\right)\left(\sum_{k=1}^{n}b_kd_k\right)-\left(\sum_{j=1}^{n}a_jd_j\right)\left(\sum_{k=1}^{n}b_kc_k\right)=&\sum_{1\leq j<k \leq n}(a_jb_k-b_ja_k)(c_jd_k-d_jc_k),\\
\forall (a_j)_{j=1}^n, (b_j)_{j=1}^n, (c_j)_{j=1}^n,  (d_j)_{j=1}^n\in \mathbb{K}^n
\end{align*}
 seems not to give non-Archimedean version of Buzano angle-length inequality.

\section{Conclusions}
\begin{enumerate}
	\item In 1821, Cauchy extended Lagrange identity and proved Cauchy-Schwarz inequality \cite{MALIGRANDA}.
	\item In 1882, Chebyshev derived  arithmetic mean inequalities for real numbers  \cite{MITRINOVICVASIC}. 
	\item In this article, we derived non-Archimedean versions of Cauchy-Schwarz angle-length and Chebyshev arithmetic mean inequalities. 
\end{enumerate}

%\section{Data Availability Statement}
%No additional data is generated in the study.

%\section{Disclosure Statement}
%Nothing to disclose.

 \bibliographystyle{plain}
 \bibliography{reference.bib}

\end{document}